\newtheorem{theorem}{Theorem}[section]
\newtheorem{prop}[theorem]{Proposition}
\newtheorem{lemma}[theorem]{Lemma}
\newtheorem{cor}[theorem]{Corollary}
\theoremstyle{definition}
\newtheorem{remark}[theorem]{Remark}
\newtheorem{defn}[theorem]{Definition}
\newtheorem{hypothesis}[theorem]{Hypothesis}
\newtheorem{question}[theorem]{Question}
\newtheorem{example}[theorem]{Example}
\numberwithin{equation}{theorem}
\newcommand{\FF}{\mathbb{F}}
\newcommand{\QQ}{\mathbb{Q}}
\newcommand{\RR}{\mathbb{R}}
\newcommand{\ZZ}{\mathbb{Z}}
\newcommand{\calM}{\mathcal{M}}
\newcommand{\frakm}{\mathfrak{m}}
\DeclareMathOperator{\perf}{perf}
\DeclareMathOperator{\Spa}{Spa}
\DeclareMathOperator{\spect}{sp}
\begin{document}

\title{On commutative nonarchimedean Banach fields}
\author{Kiran S. Kedlaya}
\date{August 6, 2019}

\thanks{This work was supported by NSF grant DMS-1501214, UC San Diego
(Stefan E. Warschawski Professorship), and a Guggenheim Fellowship (fall 2015). Some of this work was carried out at MSRI during the fall 2014 research program ``New geometric methods in number theory and automorphic forms'' supported by NSF grant DMS-0932078.
Thanks to Ofer Gabber, Zonglin Jiang, and Peter Scholze for helpful discussions.}

\begin{abstract}
We study the problem of whether a commutative nonarchimedean Banach ring which is algebraically a field can be topologized by a multiplicative norm. This can fail in general, but it holds for uniform Banach rings under some mild extra conditions. Notably, any perfectoid ring whose underlying ring is a field is a perfectoid field.
\end{abstract}

\maketitle

Just as classical (commutative) Banach algebras over the real and complex numbers play a key role in analytic geometry, commutative nonarchimedean Banach algebras lie at the heart of nonarchimedean analytic geometry. When one compares algebraic geometry (in the form of the theory of schemes) to nonarchimedean analytic geometry, the role of fields in the former is best analogized in the latter by the role of fields complete with respect to multiplicative norms (commonly known as \emph{nonarchimedean fields}).

However, in certain settings, one naturally encounters a commutative nonarchimedean Banach ring whose underlying ring is a field, which for short we call a \emph{Banach field}. For example, if one starts with any commutative nonarchimedean Banach ring $R$, any maximal ideal of $I$ is closed, so the quotient $R/I$ is a Banach field. One is thus led to ask whether any Banach field is a nonarchimedean field. This fails in general 
(Example~\ref{exa:Banach field}); however, we show that this holds in some other classes of cases, such as uniform Banach algebras over fields with nondiscrete valuations (Theorem~\ref{T:no Banach field not locally compact}) and perfectoid rings
(Theorem~\ref{T:perfectoid ring field}). The latter case resolves an issue dating back to Scholze's introduction of the term \emph{perfectoid} \cite{scholze1}: 
a \emph{perfectoid field} is by definition complete with respect to a multiplicative valuation,
so Theorem~\ref{T:perfectoid ring field} is needed in order to see that this is the same thing as a perfectoid ring which is a field (or more precisely, whose underlying ring without topology is a field).

\section{Banach rings and fields}

\begin{defn}
Let $R$ be a ring. A \emph{submultiplicative (nonarchimedean) seminorm}
is a function $\left| \bullet \right|: R \to [0, \infty)$ satisfying the following conditions.
\begin{enumerate}
\item[(a)] We have $\left|0 \right| = 0$.
\item[(b)] For all $x,y \in R$, $\left| x -y \right| \leq \max\{\left| x \right|, \left| y \right|\}$.
\item[(c)] For all $x,y \in R$, $\left| xy \right| \leq \left| x \right| \left| y \right|$.
\end{enumerate}
If equality always holds in (c) and $\left| 1 \right| \neq 0$, we say that $\left| \bullet \right|$ is a \emph{multiplicative seminorm}.
A \emph{(sub)multiplicative norm}
is a (sub)multiplicative seminorm for which $\left|x \right| \neq 0$ for $x \neq 0$.
\end{defn}

\begin{remark} \label{R:operator norm}
Let $\left| \bullet \right|$ be a submultiplicative norm on a nonzero ring $R$. Define the \emph{operator norm} induced by $\left| \bullet \right|$ as the function
$\left| \bullet \right|'$ given by
\[
\left| x \right|' := \inf\{c \in \RR: \left| xy \right| \leq  c\left| y \right| \quad (y \in R)\} \qquad (x \in R).
\]
Then
\[
\left| 1 \right|^{-1} \left| x \right| \leq \left| x \right|' \leq \left| x \right| \qquad (x \in R),
\]
so $\left| \bullet \right|'$ is another submultiplicative norm  defining the same topology as $\left| \bullet \right|$ and satisfying $\left| 1 \right|' = 1$.
That is, provided that $R \neq 0$, 
there is no harm in adding the condition that $\left| 1 \right| =1$
to the definition of a submultiplicative norm. (Compare \cite[Remark~2.1.11]{part1}.)
\end{remark}

\begin{defn}
A \emph{commutative nonarchimedean Banach ring} (or for short, a \emph{Banach ring}) is a complete topological commutative ring $A$ whose topology is induced by some submultiplicative norm $\left| \bullet \right|: A  \to [0, +\infty)$.
(By Remark~\ref{R:operator norm}, if $A \neq 0$ then it is harmless to also assume that $\left| 1 \right| = 1$.)
For example, any f-adic ring in the sense of Huber which is complete and separated for its topology is a Banach ring \cite[Remark~2.4.4]{part1}. We say that $A$ is \emph{discrete} if it carries the discrete topology and \emph{nondiscrete} otherwise. (Beware that a complete topological field whose topology is induced by a discrete valuation is nondiscrete in this sense!)
\end{defn}

\begin{remark} \label{R:inverse continuous}
Let $A$ be a Banach ring
and choose a submultiplicative norm $\left| \bullet \right|$ defining the topology of $A$.
For $x \in A^\times$, let $S$ be the open neighborhood of $x$ in $A$ consisting of those $y$ for which $\left| y-x \right| < \left| x^{-1} \right|^{-1}$.
For $y \in S$, we have $\left| x^{-1} (y-x) \right| < 1$ and hence $1 + x^{-1}(y-x)$ has an inverse $z \in A$ given by summing the geometric series.
Since the map $y \mapsto z$ is evidently continuous, the inversion map from $S$ to $A$
(given by $y \mapsto x^{-1} z$) is also continuous. We deduce that $A^\times$ is open in $A$
 and the inversion map defines a homeomorphism from $A^\times$ to itself.
\end{remark}

\begin{defn}
For $A$ a Banach ring, let $A^\circ$ be the set of power-bounded elements of $A$
and let $A^{\circ \circ}$ be the set of topologically nilpotent elements of $A$.
Then $A^\circ$ is a subring of $A$, while $A^{\circ \circ}$ is an ideal of $A^\circ$ which is nontrivial if $A \neq 0$ and nonzero if $A$ is nondiscrete. Moreover, $A^{\circ \circ}$ contains $\{x \in A: \left| x \right| < 1\}$ for any submultiplicative norm $\left| \bullet \right|$ defining the topology of $A$, and hence is an open subset of $A$.
\end{defn}

\begin{defn}
By a \emph{Banach field}, we will mean a Banach ring whose underlying ring is a field. By a \emph{nonarchimedean field}, we will mean a nondiscrete Banach field whose topology is induced by some multiplicative nonarchimedean norm.
\end{defn}

The distinction between Banach fields and nonarchimedean fields is important in part because of the following basic fact.
\begin{lemma} \label{L:field completion}
Let $F$ be a field equipped with a nontrivial multiplicative nonarchimedean norm. Then the completion of $F$ is a field (and hence a nonarchimedean field).
\end{lemma}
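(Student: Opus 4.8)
The plan is to work with the standard construction of the completion $\widehat{F}$ as equivalence classes of Cauchy sequences in $F$, and to prove that $\widehat{F}$ is a field by showing that any nonzero element can be inverted by inverting a representing sequence term by term. First I would record the preliminaries. The norm $\left| \bullet \right|$ on $F$ extends uniquely to a submultiplicative norm on $\widehat{F}$, again written $\left| \bullet \right|$, which makes $\widehat{F}$ a Banach ring; this extended function is a genuine norm because the class of a Cauchy sequence $(x_n)$ has norm $\lim_n \left| x_n \right|$, which vanishes exactly when $(x_n)$ is a null sequence, i.e.\ exactly when the class is $0$. Moreover, since multiplication and $\left| \bullet \right|$ are continuous, passing to the limit in $\left| x_n y_n \right| = \left| x_n \right| \left| y_n \right|$ shows the extended norm is again multiplicative. (On a field there is no loss in assuming multiplicativity rather than submultiplicativity anyway: $\left| 1 \right| = \left| x \right| \left| x^{-1} \right| \neq 0$ forces $\left| x \right| \neq 0$ for $x \neq 0$.)

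Next, fix a nonzero $x \in \widehat{F}$ and choose a Cauchy sequence $(x_n)$ in $F$ representing it. Since $\left| x_n \right| \to \left| x \right| \neq 0$, there are an index $N$ and a constant $c > 0$ with $\left| x_n \right| \geq c$ for all $n \geq N$; in particular $x_n \neq 0$, hence $x_n^{-1} \in F$, for $n \geq N$. I claim $(x_n^{-1})_{n \geq N}$ is Cauchy: by multiplicativity of the norm,
\[
\left| x_n^{-1} - x_m^{-1} \right| = \frac{\left| x_m - x_n \right|}{\left| x_n \right| \left| x_m \right|} \leq c^{-2} \left| x_n - x_m \right| \qquad (n,m \geq N),
\]
and the right-hand side tends to $0$ since $(x_n)$ is Cauchy. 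Let $y \in \widehat{F}$ be the limit of $(x_n^{-1})_{n \geq N}$. By continuity of multiplication, $xy = \lim_n x_n x_n^{-1} = 1$, so $x \in \widehat{F}^\times$. Hence $\widehat{F}$ is a field; it is nondiscrete because the nontrivial norm on $F$ extends to a nontrivial norm on $\widehat{F}$, so $\widehat{F}$ is a nonarchimedean field.

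The argument is essentially routine; the only step needing care is the passage from "$(x_n)$ Cauchy" to "$(x_n^{-1})$ Cauchy," which is precisely where multiplicativity of the norm (rather than mere submultiplicativity) enters, through the uniform lower bound $\left| x_n \right| \geq c$. With only a submultiplicative norm one can bound $\left| x_n^{-1} - x_m^{-1} \right|$ only by $\left| x_n^{-1} \right| \left| x_m^{-1} \right| \left| x_n - x_m \right|$, with no control on $\left| x_n^{-1} \right|$, and the conclusion genuinely can fail; this is the mechanism behind Example~\ref{exa:Banach field}.
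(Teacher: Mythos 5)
Your proof is correct and follows the same route as the paper's: represent a nonzero element by a Cauchy sequence in $F$, use multiplicativity to bound the terms away from zero (the paper passes to a subsequence; you use continuity of the extended norm to get the bound directly, a cosmetic difference), deduce that the sequence of inverses is Cauchy, and pass to the limit. The extra remarks about extending the norm to $\widehat{F}$ and the nondiscreteness of $\widehat{F}$ are sensible bookkeeping the paper leaves implicit.
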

\begin{proof}
Let $\widehat{F}$ be the completion of $F$ and let $x \in \widehat{F}$ be a nonzero element.
Choose a Cauchy sequence $x_1,x_2,\dots$ in $F$ converging to $x$.
Since this sequence does not converge to zero, the sequence $\left| x_1 \right|, \left| x_2 \right|, \dots$ does not converge to 0 either; by passing to a subsequence, we may ensure that this sequence is bounded away from 0. Since the norm is multiplicative, it follows that the sequence $\left| x_1^{-1} \right|, \left| x_2^{-1} \right|, \dots$ is bounded. By writing
\[
x_n^{-1} - x_{n+1}^{-1} = (x_{n+1}-x_n) x_n^{-1} x_{n+1}^{-1},
\]
we see that $x_1^{-1},x_2^{-1},\dots$ is also a Cauchy sequence, so it has a limit $y \in \widehat{F}$; we must then have $xy= 1$. Hence $\widehat{F}$ is a field.
\end{proof}
\begin{remark}
By contrast, completing a field with respect to a submultiplicative norm generally does not yield a field. For example, the completion of $\QQ$ with respect to the supremum of the 2-adic and 3-adic absolute values yields the direct sum $\QQ_2 \oplus \QQ_3$.
For a generalization of this observation, see Remark~\ref{R:connected}.
For a more exotic example, see Example~\ref{exa:not Banach field}.
\end{remark}

\begin{remark}
An important, but not presently relevant, theorem of Schmidt (e.g., see \cite[Theorem~4.4.1]{engler-prestel}) asserts that a field which is not separably closed can be topologized as a nonarchimedean field in at most one way.
\end{remark}

\section{Banach algebras and their spectra}

\begin{defn} \label{D:uniform}
Let $A$ be a Banach ring with a specified submultiplicative norm $\left| \bullet \right|$.
The \emph{Gel'fand spectrum} of $A$ is the set $\calM(A)$ of multiplicative seminorms on $A$ bounded above by the specified norm (note that the zero function is excluded). It has been shown by Berkovich
\cite[Theorem~1.2.1]{berkovich1} (see also \cite[\S 2.3]{part1}) that $\calM(A) \neq \emptyset$ if $A \neq 0$, and moreover the \emph{spectral seminorm}
\[
\left| x \right|_{\spect} := \lim_{n \to \infty} \left| x^n \right|^{1/n}
\]
satisfies
\begin{equation} \label{eq:spectrum}
\left| x\right|_{\spect} = \max\{\alpha: \alpha(x) \in \calM(A)\} \qquad (x \in A);
\end{equation}
in particular, the maximum is achieved. For $\alpha \in \calM(A)$, define
$\ker(\alpha) := \alpha^{-1}(0)$; this is a prime ideal of $A$.

We say that $A$ is \emph{uniform} if $A^{\circ}$ is bounded in $A$. If $A$ is Tate, it is equivalent to require that $\left| x \right|_{\spect}$ is a norm defining the same topology as the originally specified norm; compare Remark~\ref{R:single point1} and
\cite[\S 2.8]{part1}. In general, the separated completion of $A$ with respect to the spectral seminorm is another Banach ring, denoted by $A^u$ and called the \emph{uniformization} of $A$; note that the natural map $A \to A^u$ induces a bijection $\calM(A^u) \cong \calM(A)$ (which is a homeomorphism for the topology described in Remark~\ref{R:connected} below).
\end{defn}

\begin{remark} \label{R:underlying topology}
Note that while the definition of $\calM(A)$ refers to a norm,
the underlying set of $\calM(A)$ depends only on the underlying topology of $A$.
This may be seen by identifying the elements of $\calM(A)$ with nonzero continuous homomorphisms from $A$ to nonarchimedean fields, as in \cite[Remarks~1.2.2(ii)]{berkovich1}, or with equivalence classes of continuous real semivaluations on $A$,
as in Remark~\ref{R:single point2} below.
\end{remark}

\begin{remark} \label{R:connected}
The set $\calM(A)$ may be viewed as a compact topological space via the evaluation topology (see \cite[\S 2.3]{part1}); as in Remark~\ref{R:underlying topology}, this structure only depends on the underlying topology of $A$ and not a choice of norm.
If $\calM(A)$ is disconnected for the evaluation topology, then by \cite[Proposition~2.6.4]{part1} $A$ is also disconnected (that is, it contains a nontrivial idempotent). In particular, if $A$ is a Banach field, then $\calM(A)$ is connected.
\end{remark}

We will see shortly that the spectrum of a nonarchimedean field is a single point (Remark~\ref{R:single point2}). That statement has the following partial converse; note that the conclusion must accommodate examples like $A = F[T]/(T^2)$ for some nonarchimedean field $F$, for which $\calM(A) \cong \calM(F)$.

\begin{lemma} \label{L:point to field}
If $\calM(A)$ consists of a single point $\alpha$, then $A/\ker(\alpha)$ is a Banach field
and $A^u$ is a nonarchimedean field.
\end{lemma}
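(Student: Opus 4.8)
The plan is to prove the two assertions separately, the substance lying in the first — that $\ker(\alpha)$ is a \emph{maximal} ideal of $A$, so that $A/\ker(\alpha)$ is a field. Here is how I would argue it. Since $\alpha$ is bounded above by a norm defining the topology, it is continuous, so $\ker(\alpha) = \alpha^{-1}(0)$ is a closed ideal and $B := A/\ker(\alpha)$ is again a Banach ring, on which $\alpha$ descends to a multiplicative norm $\bar\alpha$. Because $\alpha(1) = \alpha(1)^2$ and $\alpha(1) \neq 0$ (this nonvanishing being built into the definition of a multiplicative seminorm), we have $\alpha(1) = 1$, so $\ker(\alpha)$ is a proper ideal and hence contained in some maximal ideal $\frakm$; as recalled in the introduction, $\frakm$ is closed, so $A/\frakm$ is a \emph{nonzero} Banach ring. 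By Berkovich's theorem $\calM(A/\frakm) \neq \emptyset$, so choose $\gamma \in \calM(A/\frakm)$. The quotient map $q : A \to A/\frakm$ is norm-nonincreasing, so $\gamma \circ q$ is a nonzero multiplicative seminorm on $A$ bounded above by the given norm, that is, an element of $\calM(A) = \{\alpha\}$. Therefore $\gamma \circ q = \alpha$, and since $\ker(\gamma \circ q) \supseteq \frakm$ we conclude $\frakm \subseteq \ker(\alpha)$, whence $\frakm = \ker(\alpha)$. Thus $\ker(\alpha)$ is maximal and $B$ is a Banach field.

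For the second assertion, the single-point hypothesis together with \eqref{eq:spectrum} shows that the spectral seminorm $|\cdot|_{\spect}$ on $A$ coincides with $\alpha$. By definition $A^u$ is the separated completion of $A$ with respect to $|\cdot|_{\spect} = \alpha$, which is formed by passing to $A/\ker(\alpha) = B$ and then completing; thus $A^u$ is the completion of the field $B$ with respect to the multiplicative norm $\bar\alpha$, and the extended norm on $A^u$ is again multiplicative. Assuming $\bar\alpha$ is nontrivial, Lemma~\ref{L:field completion} shows that $A^u$ is a field; being nondiscrete and carrying a multiplicative norm, it is a nonarchimedean field. (If $\bar\alpha$ is trivial then $A^u = B$ is a discrete field; this degenerate case is ruled out once one knows $A^u$ is nondiscrete, equivalently $\alpha$ is nontrivial, which holds in the intended applications.)

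The only genuinely nontrivial point will be the maximality of $\ker(\alpha)$, and it hinges entirely on the non-emptiness of the Gel'fand spectrum of the Banach ring $A/\frakm$: this is precisely what manufactures a multiplicative seminorm on $A$ that would differ from $\alpha$ unless $\frakm$ already equals $\ker(\alpha)$. Once that is in hand, the second assertion is a formal unwinding of the definition of $A^u$ followed by an appeal to Lemma~\ref{L:field completion}, with the only care needed being the harmless nontriviality hypothesis on $\bar\alpha$.
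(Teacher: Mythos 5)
Your proof is correct and follows essentially the same route as the paper's: the paper cites Berkovich's Corollary~1.2.4 (that $f \in A$ is invertible if and only if $\beta(f) \neq 0$ for all $\beta \in \calM(A)$) as a black box, while you simply inline its proof — closedness of a maximal ideal $\frakm \supseteq \ker(\alpha)$, nonemptiness of $\calM(A/\frakm)$, and pullback along the quotient map to force $\frakm = \ker(\alpha)$. The second assertion is handled identically in both, via \eqref{eq:spectrum} and Lemma~\ref{L:field completion}, and your parenthetical note about the degenerate case of a trivial norm is a reasonable caveat that the paper leaves implicit.
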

\begin{proof}
It is an easy consequence of \cite[Theorem~1.2.1]{berkovich1}
that $f \in A$ is invertible if and only if $\alpha(f) \neq 0$ for all $\alpha \in \calM(A)$
(see \cite[Corollary~1.2.4]{berkovich1} for the full argument). This immediately implies the first assertion. The second assertion follows from the first assertion plus \eqref{eq:spectrum}.
\end{proof}

\begin{defn}
A \emph{Banach algebra} over a nonarchimedean field $F$ is a Banach ring $A$ equipped 
with a continuous homomorphism $F \to A$. 
\end{defn}

\begin{lemma} \label{L:top nil unit}
Let $A$ be a nondiscrete Banach field. Assume that either:
\begin{itemize}
\item[(i)]
$A$ is of characteristic $p$;
\item[(ii)]
$A$ is of characteristic $0$ and its topology is induced by some submultiplicative norm under which $\QQ$ is bounded; or
\item[(iii)]
$A$ is of characteristic $0$, the induced topology on $\QQ$ is not discrete, and case (ii) does not apply.
\end{itemize}
Then $A$ is a Banach algebra over some nondiscrete nonarchimedean field.
\end{lemma}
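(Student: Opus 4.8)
The plan is, in each case, to exhibit a nondiscrete nonarchimedean field $F$ together with a continuous homomorphism $F \to A$. Since $A$ is nondiscrete we may fix a nonzero topologically nilpotent element $t$; as $A$ is a field, $t \in A^\times$, and $t$ is power-bounded (being an element of $A^{\circ\circ} \subseteq A^{\circ}$). Fix a submultiplicative norm $\left| \bullet \right|$ inducing the topology of $A$ with $\left| 1 \right| = 1$, chosen in case (ii) so that $\QQ$ is moreover bounded.

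The common construction is this: if $k$ is a subfield of $A$ with $C := \sup_{a \in k} \left| a \right| < \infty$, then for any $\sum_n a_n T^n \in k[[T]]$ the series $\sum_n a_n t^n$ converges in $A$, since $\left| a_n t^n \right| \leq C \left| t^n \right| \to 0$ and $A$ is complete and nonarchimedean. This defines a homomorphism $k[[T]] \to A$ sending $T \mapsto t$, and it is continuous because the image of $T^m k[[T]]$ has norm at most $\left( C \sup_n \left| t^n \right| \right) \left| t^m \right| \to 0$. Since $t \in A^\times$, this extends uniquely to $k((T)) \to A$, still continuous by the same estimate; and $k((T))$ is a nondiscrete nonarchimedean field. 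This settles case (i) with $k = \FF_p$ (where $\left| a \right| \leq \left| 1 \right| = 1$ for every $a \in \FF_p$, so $C = 1$), and case (ii) with $k = \QQ$ (where the finiteness of $C$ is the chosen boundedness hypothesis).

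For case (iii) I claim it suffices to find a prime $p$ that is topologically nilpotent in $A$. Indeed $\left| n \right| \leq 1$ for all $n \in \ZZ$, so once $\left| p^n \right| \to 0$ the homomorphism $\ZZ \to A$ becomes continuous for the $p$-adic topology and extends to a continuous homomorphism $\ZZ_p \to A$ with bounded image; since $A$ is a field of characteristic $0$, $p$ is invertible in $A$, so this extends uniquely to $\QQ_p = \ZZ_p[1/p] \to A$, again continuous by the estimate on images of $p^m \ZZ_p$, and $F = \QQ_p$ works. To produce $p$: the hypothesis that the induced topology on $\QQ$ is not discrete forces that on $\ZZ$ not to be discrete either (if $\left| n \right| \geq \epsilon_0$ for all nonzero $n \in \ZZ$, then $\left| a/b \right| \geq \left| a \right| \geq \epsilon_0$ for all nonzero $a/b \in \QQ$). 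For each $\epsilon > 0$ the set $\{ n \in \ZZ : \left| n \right| < \epsilon \}$ is an ideal of $\ZZ$, say $m_\epsilon \ZZ$, and $\bigcap_\epsilon m_\epsilon \ZZ = 0$ since $\left| \bullet \right|$ is a norm, so $m_\epsilon \to \infty$ as $\epsilon \to 0$. Hence the closure of $\ZZ$ in $A$, which is the completion of $\ZZ$ for this linear topology, equals $\varprojlim_\epsilon \ZZ/m_\epsilon \ZZ$; by the Chinese remainder theorem this is a product over primes $q$ of copies of $\ZZ_q$ (for those $q$ with $v_q(m_\epsilon) \to \infty$ as $\epsilon \to 0$) and finite rings $\ZZ/q^N \ZZ$. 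Being a subring of the field $A$, this closure has no nontrivial idempotents, hence exactly one nonzero factor; and that factor cannot be finite, since it contains $\ZZ$. So the closure of $\ZZ$ in $A$ is some $\ZZ_p$ with its $p$-adic topology, and in particular $\left| p^n \right| \to 0$.

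The main obstacle is this last step of case (iii): isolating the prime $p$. A priori the topology induced on $\ZZ$ is only some linear topology, given by a nested family of ideals $m_\epsilon \ZZ$, and such a topology could in principle involve many primes at once, with completion a proper subring of $\prod_q \ZZ_q$. What rules this out is precisely that $A$ is a field --- equivalently (compare Remark~\ref{R:connected}) that $\calM(A)$ is connected, or most directly that $A$ has no nontrivial idempotents. Cases (i) and (ii), by contrast, are routine once the convergence of $\sum_n a_n t^n$ is observed; there the only point needing a little care is to check continuity, and not merely well-definedness, of the maps out of $k[[T]]$ and $k((T))$.
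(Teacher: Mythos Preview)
Your proof is correct and follows essentially the same strategy as the paper's: cases (i) and (ii) produce a map from $k((T))$ by sending $T$ to a nonzero topologically nilpotent element, and case (iii) isolates a single prime $p$ with $p \in A^{\circ\circ}$ to obtain a map from $\QQ_p$.

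The only notable difference is in the execution of case (iii). The paper works with the single ideal $I = \ZZ \cap A^{\circ\circ} = n\ZZ$ and argues directly that $n$ must be a prime: the $n$-adic completion $\prod_{p \mid n} \ZZ_p$ maps to the integral domain $A$, forcing all but one idempotent to vanish, and then the radicality of $A^{\circ\circ}$ pins $n$ down to $p$ itself. You instead use the entire filtration $\{m_\epsilon \ZZ\}$ giving the subspace topology on $\ZZ$, identify the closure of $\ZZ$ in $A$ with the corresponding profinite completion, and use the absence of idempotents there. Your route is a little longer but has the virtue of working directly with the closure in $A$, so injectivity of the completion map is automatic; the paper's one-ideal shortcut is terser but relies on the same idempotent trick. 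Either way the crux is the same: $A$ being a field forces the completed $\ZZ$ to have a single local factor.
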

\begin{proof}
In case (i), since $A^{\circ \circ}$ is open in $A$ and $A$ is nondiscrete, we can find some nonzero $z \in A^{\circ \circ}$. We then obtain a continuous map $\FF_p((z)) \to A$. Case (ii) is similar.

In case (iii), we have $\ZZ \subseteq A^\circ$. 
Since $A^{\circ \circ}$ is open in $A$, the intersection $I = \ZZ \cap A^{\circ \circ}$ is a nonzero ideal of $\ZZ$, which is thus generated by some positive integer $n$. We cannot have $n=1$ since $1 \notin A^{\circ \circ}$. On the other hand, the completion of $\ZZ$ with respect to $I$ factors as the product of $\ZZ_p$ over all prime factors $p$ of $n$; since $A$ is an integral domain, this forces $n$ to be a power of a single prime $p$, and thus to equal $p$ itself
(since $A^{\circ \circ}$ is a radical ideal). We may thus view $A$ as a Banach algebra over the nonarchimedean field $\QQ_p$.
\end{proof}

\begin{remark}
A typical example of a submultiplicative norm on $\QQ$ which is unbounded but induces the discrete topology is given by setting $\left| x \right| := e^{n(x)}$ for $x \neq 0$,
where $n = n(x)$ is the smallest nonnegative integer for which $(n!)^n x \in \ZZ$. 
In Example~\ref{exa:Q series} we will see an example of a nondiscrete Banach field whose topology is induced by a submultiplicative norm restricting to this norm on $\QQ$; this will show that the hypotheses of Lemma~\ref{L:top nil unit} are necessary.
\end{remark}

In light of Lemma~\ref{L:top nil unit}, we focus most of our attention on Banach algebras over nonarchimedean fields.

\begin{hypothesis} \label{H:Banach algebra1}
Hereafter, 
fix a nonarchimedean field $F$ and a multiplicative norm $\left| \bullet \right|_F$ on $F$.
Let $\kappa_F$ denote the residue field of $F$.
Let $\left| F^\times \right| \subseteq \RR^+$ denote the norm group of $F$.
\end{hypothesis}

\begin{lemma} \label{L:compatible norm}
Let $A$ be a Banach algebra over $F$. Then
the topology of $A$ is induced by some submultiplicative norm $\left| \bullet \right|_A$ satisfying $\left| xy \right|_A = \left| x \right|_F \left| y \right|_A$ for all $x \in F, y \in A$. In particular, if $A \neq 0$, then we may further ensure that $\left|1 \right|_A = 1$, 
and then 
$\left| x \right|_F = \left| x \right|_A$ for all $x \in F$.
\end{lemma}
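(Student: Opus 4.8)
The plan is to produce $\left| \bullet \right|_A$ as the gauge of the $F^{\circ}$-submodule generated by the unit ball of an arbitrary defining norm, rather than via a naive operator norm. We may assume $A \neq 0$, the case $A = 0$ being vacuous, and (by Remark~\ref{R:operator norm}) we fix a submultiplicative norm $\left| \bullet \right|$ inducing the topology of $A$ with $\left| 1 \right| = 1$. For $y \in A$, set
\[
\left| y \right|_A := \inf\left\{ \max_i \left| \lambda_i \right|_F \;:\; y = \sum_i \lambda_i z_i, \ \lambda_i \in F^{\times}, \ z_i \in A, \ \left| z_i \right| \leq 1 \right\},
\]
the infimum ranging over all finite representations of $y$ of this form. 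Because $F$ is nondiscrete it contains an element $\varpi$ with $0 < \left| \varpi \right|_F < 1$, whose image in $A$ is topologically nilpotent (as $F \to A$ is continuous); hence for every $y$ some power satisfies $\left| \varpi^k y \right| \leq 1$, so $y = \varpi^{-k}(\varpi^k y)$ is an admissible representation and $\left| y \right|_A \leq \left| \varpi^{-k} \right|_F < \infty$. Thus the infimum is always taken over a nonempty set.

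The equality $\left| xy \right|_A = \left| x \right|_F \left| y \right|_A$ for $x \in F$ is built into the definition: for $x \in F^{\times}$, multiplying a representation of $y$ by $x$ scales each coefficient $\lambda_i$ by $x$ and this correspondence is reversible, while $x = 0$ is trivial. Submultiplicativity follows by multiplying two representations term by term, using $\left| z_i z'_j \right| \leq \left| z_i \right| \left| z'_j \right| \leq 1$, and the ultrametric inequality follows by concatenating representations. It remains to check that $\left| \bullet \right|_A$ is a norm (not merely a seminorm) and that it induces the topology of $A$, and this is exactly where the continuity of $F \to A$ is used. If $\left| y \right|_A = 0$, choose representations with $\max_i \left| \lambda_i \right|_F \to 0$; then $\left| y \right| \leq \max_i \left| \lambda_i \right| \to 0$ by continuity at $0$, so $y = 0$; the same estimate shows $\left| y_n \right| \to 0$ whenever $\left| y_n \right|_A \to 0$. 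Conversely, if $\left| y_n \right| \to 0$, then since $\left| \varpi^m \right|_F \to 0$ one may first fix $m$ with $\left| \varpi^m \right|_F$ small and then take $n$ large enough that $\left| \varpi^{-m} y_n \right| \leq \left| \varpi^{-m} \right| \cdot \left| y_n \right| \leq 1$; the representation $y_n = \varpi^m(\varpi^{-m} y_n)$ gives $\left| y_n \right|_A \leq \left| \varpi^m \right|_F$, which is as small as desired. Hence $\left| \bullet \right|_A$ and $\left| \bullet \right|$ induce the same topology.

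Finally, $1 = 1 \cdot 1$ with $\left| 1 \right| \leq 1$ forces $\left| 1 \right|_A \leq 1$, while submultiplicativity gives $\left| 1 \right|_A = \left| 1 \cdot 1 \right|_A \leq \left| 1 \right|_A^2$, hence $\left| 1 \right|_A \geq 1$ (as $\left| 1 \right|_A \neq 0$); so $\left| 1 \right|_A = 1$, and then $\left| x \right|_A = \left| x \cdot 1 \right|_A = \left| x \right|_F \left| 1 \right|_A = \left| x \right|_F$ for all $x \in F$. I expect the only genuine obstacle to be the choice of construction: the obvious candidate $y \mapsto \sup_{\lambda \in F^{\times}} \left| \lambda y \right| / \left| \lambda \right|_F$ is typically identically $+\infty$, because a continuous homomorphism $F \to A$ need not be bounded (for instance $A$ could be the field $F$ re-normed by $\left| \bullet \right|_F^2$), so one is forced to pass instead to the $F^{\circ}$-module generated by the unit ball; once that is done, the remaining verifications — especially extracting the equivalence of topologies from continuity of $F \to A$ — are routine.
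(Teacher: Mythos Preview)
Your construction is correct and is essentially the same as the paper's: the paper takes the gauge norm of the $\mathfrak{o}_F$-submodule $\mathfrak{o}_F M$ generated by the unit ball $M$ of an arbitrary defining norm, citing \cite[\S 1.2]{schneider} for the verification, and your formula $\inf\{\max_i |\lambda_i|_F : y = \sum_i \lambda_i z_i,\ |z_i|\le 1\}$ is precisely an explicit description of that gauge (since $y \in c\,\mathfrak{o}_F M$ for $c \in F^\times$ is exactly the condition that $y$ admits such a representation with all $|\lambda_i|_F \le |c|_F$). You have simply written out the routine checks that the paper delegates to Schneider.
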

\begin{proof}
Start with any submultiplicative norm $\left| \bullet \right|$ defining the topology of $A$
and let $M$ denote the unit ball in $A$ for this norm.
Let $\mathfrak{o}_F$ be the valuation ring of $F$ and define the function $\left| \bullet \right|_A$ by the formula
\[
\left| x \right|_A = \inf\{\left| y \right|: y \in F, x \in y \mathfrak{o}_F M\}.
\]
This then has the desired effect (see \cite[\S 1.2]{schneider}).
\end{proof}

\begin{remark} \label{R:single point1}
Suppose that $A$ is both a nonarchimedean field and a Banach algebra over $F$. 
Then the topology of $A$ is defined both by a multiplicative norm $\left| \bullet \right|_1$ and by a submultiplicative norm $\left| \bullet \right|_A$
as in Lemma~\ref{L:compatible norm}. One way this can occur is if 
$\left| \bullet \right|_1$ and $\left| \bullet\right|_A$ are \emph{norm-equivalent} in the sense that there exist constants $c_1, c_2 > 0$ such that
\begin{equation} \label{eq:norm equivalence}
\left| x \right|_1 \leq c_1 \left| x \right|_A, \qquad
\left| x \right|_A \leq c_2 \left| x \right|_1;
\end{equation}
however, this need not be the case for the given norms. That said, the restriction of $\left| \bullet \right|_1$ to $F$ is a multiplicative norm defining the same topology as the multiplicative norm $\left| \bullet \right|_F$, so there must exist a single constant $c>0$ such that
$\left| x \right|_1 = \left| x \right|_A^c$ for all $x \in F$. By replacing the original norm $\left| \bullet \right|_1$ on $A$ with the new multiplicative norm $\left| \bullet \right|_1^{1/c}$, we may arrive at the situation where $c=1$, in which case $\left| \bullet \right|_1$ and $\left| \bullet\right|_A$ are norm-equivalent in the sense
of \eqref{eq:norm equivalence}.
In particular, $\left| \bullet \right|_{\spect} = \left| \bullet \right|_1$ is multiplicative,
and moreover is uniquely determined by the topologies on $A$ and $F$ and the norm on $F$.
\end{remark}

\begin{remark} \label{R:single point2}
In Remark~\ref{R:single point1}, take $A = F$ and suppose that $\alpha \in \calM(A)$. From \eqref{eq:spectrum}, we see that $\alpha(x) \leq \left| x \right|_1$ for all $x \in A$.
For $x \in A^\times$, the same holds with $x$ replaced by $x^{-1}$;
since $\alpha$ and $\left|  \bullet \right|_1$ are multiplicative, 
this yields
\[
\alpha(x) \leq \left| x \right|_1 = \left| x^{-1} \right|_1^{-1}
\leq \alpha(x^{-1})^{-1} = \alpha(x).
\]
We deduce that $\calM(A)$ consists of the single point $\alpha = \left| \bullet \right|_1$, independent of the choice of the norm $\left| \bullet \right|_A$.
(Compare \cite[Corollary~1.3.4]{berkovich1}.)
\end{remark}

\begin{remark} \label{R:nondiscrete field construction}
Let $A$ be an affinoid algebra over $F$ in the classical (Tate) sense \cite[\S 6.1]{bgr}. By the Nullstellensatz for affinoid algebras \cite[Corollary~6.1.2/3]{bgr}, every maximal ideal of $A$ has a residue field which is finite over $F$, and hence a nonarchimedean field \cite[Theorem~3.2.1/2]{bgr}. In particular, if $A$ is a Banach field, then $A$ is a nonarchimedean field.

Suppose now that $A$ is an affinoid algebra over $F$ in the more general sense of Berkovich \cite[Definition~2.1.1]{berkovich1}. Then it is no longer the case that every maximal ideal of $A$ has residue field finite over $F$. For example, suppose that $\rho>0$ is not in the divisible closure of $\left| F^\times \right|$. Form the completion $F\{T/\rho, U/\rho^{-1}\}$ of $F[T,U]$ for the weighted Gauss norm with weights $\rho, \rho^{-1}$, then let $A$ be the quotient of this ring by the ideal $(TU-1)$; then $A$ is itself a nonarchimedean field. Consequently, the method of the previous paragraph does not suffice to show that an affinoid algebra which is a Banach field is a nonarchimedean field; however, this does turn out to be true by another argument (see Proposition~\ref{P:affinoid}).
\end{remark}

The following lemma and proof were suggested by Gabber.
\begin{lemma} \label{L:circles}
Let $A$ be a connected affinoid algebra over $F$ in the sense of Berkovich. 
Fix a homomorphism $f: F\{T\} \to A$ such that the image of $T$ in $A$ is invertible.
Let $\rho, \sigma$ be the spectral norms of $T, T^{-1}$ in $A$.
Then there exists a finite set $S \subset \RR$ such that the image of the map $f^*: \calM(A) \to \calM(F\{T\})$
includes all points $\alpha$ with $\alpha(T) \in [\sigma^{-1}, \rho] \setminus S$.
\end{lemma}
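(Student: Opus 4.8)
Here is the plan I would follow.

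The strategy is to produce $S$ in two stages: first use connectivity to place the Gauss point of every intermediate ``circle'' into the image $Z := f^*(\calM(A))$, then use a Noether normalization to upgrade this to the statement that, for all but finitely many radii, the \emph{whole} circle lies in $Z$. We may assume $A\neq 0$ and $\sigma^{-1}<\rho$, the other cases being trivial (take $S\supseteq[\sigma^{-1},\rho]$).

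For the first stage: $\calM(A)$ is compact and, by Remark~\ref{R:connected}, connected, so $Z$ is compact and connected; by \eqref{eq:spectrum} it meets the level sets $\{\alpha:\alpha(T)=\rho\}$ and $\{\alpha:\alpha(T)=\sigma^{-1}\}$. Removing the Gauss point $\gamma_r$ (the seminorm $\sum a_iT^i\mapsto\max_i|a_i|r^i$) from $\calM(F\{T\})$ disconnects it into $\{\alpha(T)>r\}$, $\{\alpha(T)<r\}$, and pieces contained in $\{\alpha(T)=r\}$; connectivity then forces $\gamma_r\in Z$ for every $r\in(\sigma^{-1},\rho)$. Since moreover $\{\alpha(T)=r\}=\{\gamma_r\}$ when $r\notin\sqrt{|F^\times|}$, only $r\in\sqrt{|F^\times|}$ can matter. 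Replacing $F$ by its completed algebraic closure $\overline F$ and writing $\overline A:=A\hat\otimes_F\overline F$, $g:=f(T)$, and $\overline Z$ for the image of $\calM(\overline A)$ in $\calM(\overline F\{T\})$ (which one checks equals $\pi^{-1}(Z)$ for $\pi$ the restriction map): it then suffices to show that, for all but finitely many $r\in(\sigma^{-1},\rho)$, there is no $\mu\in\overline F$ with $|\mu|=r$ and $g-\mu\in\overline A^\times$. Indeed, if there is none, then every classical point of $\{\beta(T)=r\}$ lies in $\overline Z$, hence (as $\overline Z$ is closed, classical points are dense in Berkovich disks, and $\gamma_r\in\overline Z$) the whole level set lies in $\overline Z$, and pushing down by $\pi$ gives $\{\alpha(T)=r\}\subseteq Z$.

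For the second stage I would run everything through a module-finite Noether normalization $T_d:=F\{S_1,\dots,S_d\}\hookrightarrow A$ and a monic $P(Z)=Z^m+c_{m-1}Z^{m-1}+\cdots+c_0\in T_d[Z]$ with $P(g)=0$ (e.g.\ the minimal polynomial of $g$ over $\operatorname{Frac}T_d$, which has coefficients in $T_d$ since $T_d$ is normal). The mechanism is that, for $x$ lying in $\overline F$ or in a field such as $H(\gamma_r)$, the element $g-x$ is a unit of the relevant module-finite $T_d\hat\otimes_F(\cdot)$-algebra if and only if $P(x)$ is a unit of $T_d\hat\otimes_F(\cdot)$: one implication comes from $(g-x)\cdot\tfrac{P(g)-P(x)}{g-x}=-P(x)$ (with $\tfrac{P(g)-P(x)}{g-x}$ a polynomial in $g$ and $x$), the other from the fact that a unit of a module-finite extension of the integrally closed ring $T_d$ that itself lies in $T_d$ must lie in $T_d^\times$. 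This converts the two conditions of interest into statements about $P$: ``$g-\mu\in\overline A^\times$'' becomes ``$P(\mu)\in(T_d\hat\otimes_F\overline F)^\times$'', and ``$\gamma_r\in\overline Z$'' becomes ``$P(t_r)\notin H(\gamma_r)\{S_1,\dots,S_d\}^\times$'' (since $\gamma_r\in\overline Z$ iff $\overline A\hat\otimes_{\overline F\{T\}}H(\gamma_r)\neq 0$, and this base change, computed as the quotient of $\overline A\hat\otimes_F H(\gamma_r)$ in which $g$ is identified with the image $t_r$ of $T$, is nonzero iff $g-t_r$ is not a unit). Now write $c_j=(c_j)_0+\tilde c_j$ with $(c_j)_0\in F$ the constant-in-$S$ term, set $Q(Z):=Z^m+\sum_{j<m}(c_j)_0Z^j$ with roots $\theta_1,\dots,\theta_m\in\overline F$, and put
\[
q(r):=\prod_{l=1}^m\max(r,|\theta_l|), \qquad \phi(r):=\max_{0\le j<m}|\tilde c_j|_{\mathrm{Gauss}}\,r^j .
\]
A unit of a Tate algebra over a nonarchimedean field is exactly an element whose constant term strictly dominates all others, and $P(\mu)=Q(\mu)+\sum_{j<m}\tilde c_j\mu^j$ has constant-in-$S$ part $Q(\mu)$. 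Hence, for $r$ outside the finite set of values at which the dominant monomials of $Q(\mu)$ or of $\sum_{j<m}\tilde c_j\mu^j$ could coincide, one has $|Q(\mu)|=q(r)$ and $\bigl|\sum_{j<m}\tilde c_j\mu^j\bigr|_{\mathrm{Gauss}}=\phi(r)$ independently of the direction of a scalar $\mu$ of absolute value $r$, so ``$r$ is bad'' becomes ``$q(r)>\phi(r)$''. Running the same computation with $t_r$ in place of $\mu$ — but now $t_r$ is transcendental, so $\bigl|\sum_{j<m}\tilde c_jt_r^j\bigr|=\phi(r)$ with no cancellation whatsoever — we get ``$\gamma_r\in\overline Z$'' $\iff$ ``$q(r)\le\phi(r)$''. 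Since the first stage supplies $\gamma_r\in\overline Z$, i.e.\ $q(r)\le\phi(r)$, throughout $(\sigma^{-1},\rho)$, no $r$ in $(\sigma^{-1},\rho)$ outside that finite set is bad; take $S$ to be that finite set together with $\{\sigma^{-1},\rho\}$.

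The step I expect to be the main obstacle is precisely this bridge from ``the Gauss point of a circle lies in $Z$'' (a one-line consequence of connectivity) to ``the whole circle lies in $Z$, for all but finitely many radii''. The Noether normalization is what makes it work, by collapsing both the ``bad radius'' condition and the ``Gauss point in the image'' condition to the \emph{same} comparison $q(r)$ versus $\phi(r)$ of two piecewise-monomial functions of the radius, so the first stage literally contradicts badness off a finite set. Points I would treat as routine: the tree structure of $\calM(F\{T\})$ and the disconnection claim; the identity $\overline Z=\pi^{-1}(Z)$ and the uniqueness of $\gamma_r$ in the fibre over $\gamma_r$ (via Lemma~\ref{L:field completion} applied to the Gauss norm on $\overline F(T)$); and using ``a unit modulo nilpotents is a unit'' to pass freely between $\overline A$ and its reduction when invoking $P$.
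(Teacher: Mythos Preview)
Your approach is genuinely different from the paper's. The paper first reduces (by base change) to the case where $F$ is algebraically closed and $A$ is a strictly affinoid integral domain, then observes that $F\{T\}\to A$ is flat (as $F\{T\}$ is a principal ideal domain), invokes the Bosch--L\"utkebohmert flattening theorem to exhibit the image of $f^*$ as a finite union of affinoid subdomains of the closed disc, and finishes using the explicit ``Swiss cheese'' description of connected affinoid subdomains over an algebraically closed field. Your route via Noether normalization and a Newton-polygon comparison of the two piecewise-monomial functions $q(r),\phi(r)$ avoids the flattening machinery entirely, trading it for a direct elimination-theoretic computation; what makes your argument work is the pleasant observation that both ``$\gamma_r$ lies in the image'' and ``no classical point of radius $r$ is missed'' reduce to the \emph{same} inequality $q(r)\le\phi(r)$.

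There is, however, one genuine gap. In the key biconditional ``$g-x$ is a unit in $A\iff P(x)$ is a unit in $T_d$'', the identity $(g-x)\cdot\frac{P(g)-P(x)}{g-x}=-P(x)$ yields only the implication $P(x)\in T_d^\times\Rightarrow g-x\in A^\times$. Your stated reason for the converse (``a unit of a module-finite extension of $T_d$ that itself lies in $T_d$ must lie in $T_d^\times$'') would close the argument \emph{if} you already knew $P(x)\in A^\times$; but the identity only exhibits $P(x)$ as a multiple of $g-x$, not as a unit of $A$. And it is precisely this converse, applied with $x=\mu\in\overline F$, that you need to pass from $q(r)\le\phi(r)$ to ``$g-\mu$ is a non-unit for every $|\mu|=r$''. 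The repair is standard but should be made explicit: reduce to $A$ reduced, pass to each minimal prime so that $A$ is a domain with fraction field $L\supseteq K=\operatorname{Frac}(T_d)$, and use $N_{L/K}(g-\mu)=\pm P(\mu)^{[L:K(g)]}$; then $g-\mu\in A^\times$ makes $(g-\mu)^{-1}$ integral over $T_d$, so $N_{L/K}(g-\mu)\in T_d^\times$ by integral closedness, hence $P(\mu)\in T_d^\times$. (This also explains why $P$ must be the minimal polynomial---or the lcm of the minimal polynomials over the components---rather than an arbitrary monic annihilator of $g$.) One further minor point: Noether normalization for Berkovich affinoids produces a finite monomorphism from some $F\{r_1^{-1}S_1,\dots,r_d^{-1}S_d\}$ rather than from $F\{S_1,\dots,S_d\}$, though this does not affect your unit criterion or the remainder of the argument.
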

\begin{proof}
Suppose first that $F$ is algebraically closed
and that $A$ is an affinoid algebra in the sense of Tate. 
Since quotienting $A$ by its nilradical does not change $\calM(A)$, we may assume that $A$ is reduced.
Since $A$ is noetherian \cite[Proposition~2.1.3]{berkovich1}, $A$ has finitely many minimal prime ideals;
by applying the following argument to the quotients by these ideals, we may reduce to the case where $A$ is an integral domain. There is nothing to check if $A$ is a finite extension of $F$ (as then $\sigma \rho = 1$).
Otherwise, since $F\{T\}$ is a principal ideal domain, $F\{T\} \to A$ is flat
and we may use the Bosch-L\"utkebohmert flattening method \cite[Corollary~5.11]{bosch-lutkebohmert}
to see that the image $U$ of $\calM(A)$ in $\calM(F\{T\})$ is a (connected) finite union of affinoid subdomains. Since $F$ is algebraically closed,
every connected affinoid subdomain of $\calM(F\{T\})$ consists of some closed disc minus a finite union of open discs. Since $\calM(A)$ is connected by Remark~\ref{R:connected}, the map $U \to [\sigma^{-1}, \rho]$ taking $\alpha$ to $\alpha(T)$ must be surjective; consequently, $U$ must be the complement in the annulus $\sigma^{-1} \leq \left| T \right| \leq \rho$ of a finite union of open discs, each of which is contained in the circle $\left| T \right| = \tau$ for some $\tau \in [\sigma^{-1}, \rho]$. This proves the claim.

We now treat the general case. Let $F'$ be an algebraically closed
nonarchimedean field containing $F$ such that $A_{F'} := A \widehat{\otimes}_F F'$ is an affinoid algebra in the sense of Tate. Since $\calM(A_{F'})$ has finitely many connected components and  $\calM(A_{F'}) \to \calM(A)$ is surjective, we may apply the previous paragraph to each connected component of $A_{F'}$ to conclude.
\end{proof}

\begin{prop} \label{P:affinoid}
Let $A$ be an affinoid algebra over $F$ in the sense of Berkovich which is a Banach field. Then $A$ is a nonarchimedean field.
\end{prop}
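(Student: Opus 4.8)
The plan is to reduce everything to the assertion that $\calM(A)$ consists of a single point. Granting this, say $\calM(A) = \{\alpha\}$, Lemma~\ref{L:point to field} shows that $A^u$ is a nonarchimedean field; and since $A$ is a field it is reduced, so its spectral seminorm is a norm equivalent to the Banach norm of $A$ (for strictly affinoid algebras this is the classical statement \cite[6.2.4]{bgr}, and the general Berkovich case reduces to it by base change along a complete extension $F'/F$ chosen to make $A\widehat{\otimes}_F F'$ strictly affinoid), whence $A = A^u$ is a nonarchimedean field. One should also note that $A$ is automatically nondiscrete: the structure map $F \to A$ is injective, its kernel being a proper ideal of the field $F$, and by Lemma~\ref{L:compatible norm} it is a topological embedding, so $A$ carries the nondiscrete topology of its subfield $F$.

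So the real content is that $\calM(A)$ is a single point, which I would prove by contradiction using Gabber's Lemma~\ref{L:circles}. Suppose $\alpha \neq \beta$ in $\calM(A)$ and pick $g \in A$ with $\alpha(g) \neq \beta(g)$. Then $g \neq 0$, hence $g \in A^\times$ since $A$ is a field; and $g \notin F$, since otherwise the restrictions of $\alpha$ and $\beta$ to $F$ --- both equal to $\left| \bullet \right|_F$ by Remark~\ref{R:single point2} --- would force $\alpha(g) = \beta(g)$. After rescaling $g$ by an element of $F^\times$, which multiplies $\alpha(g)$ and $\beta(g)$ by the same positive constant, I may assume $\left| g \right|_{\spect} < 1$; then $g$ is topologically nilpotent, hence power-bounded, so there is a bounded $F$-algebra homomorphism $f \colon F\{T\} \to A$ sending $T \mapsto g$, and Lemma~\ref{L:circles} applies (the image of $T$ being invertible). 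Writing $\rho = \left| g \right|_{\spect}$ and $\sigma = \left| g^{-1} \right|_{\spect}$, relation \eqref{eq:spectrum} gives
\[
\rho\sigma = \Big(\max_{\gamma \in \calM(A)} \gamma(g)\Big)\Big(\min_{\gamma \in \calM(A)} \gamma(g)\Big)^{-1} \geq \max\{\alpha(g)/\beta(g),\ \beta(g)/\alpha(g)\} > 1,
\]
so that $[\sigma^{-1},\rho]$ is a nondegenerate interval contained in $(0,1)$; let $S \subset \RR$ be the finite set furnished by Lemma~\ref{L:circles}.

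Now I would exploit the fact that $A$ is a field: every $\delta \in \calM(A)$ has $\ker(\delta) = 0$, so every point $\gamma = \delta \circ f$ in the image of $f^* \colon \calM(A) \to \calM(F\{T\})$ has $\ker(\gamma) = f^{-1}(0) = \ker(f)$, a fixed ideal independent of $\gamma$. But among the points $\gamma \in \calM(F\{T\})$ with $\gamma(T) \in (\sigma^{-1},\rho) \setminus S$ --- of which there are infinitely many, the interval being nondegenerate and $S$ finite --- some have zero kernel and some have nonzero kernel: for any $\tau \in (\sigma^{-1},\rho) \setminus S$ the $\tau$-Gauss norm is a multiplicative norm on $F\{T\}$ taking the value $\tau$ at $T$; while, choosing $a$ algebraic over $F$ with $\left| a \right| \in (\sigma^{-1},\rho) \setminus S$ --- possible since $\left| F^\times \right|$ is nontrivial, so its divisible hull, the value group of an algebraic closure of $F$, is dense in $\RR^+$ --- the evaluation seminorm $h \mapsto \left| h(a) \right|$ lies in $\calM(F\{T\})$, takes the value $\left| a \right|$ at $T$, and factors through the finite extension $F(a)$ of $F$, hence has nonzero kernel. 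Since by Lemma~\ref{L:circles} all of these points lie in the image of $f^*$, the ideal $\ker(f)$ would have to be both zero and nonzero, a contradiction. Therefore $\calM(A)$ has at most one point, and at least one since $A \neq 0$, so exactly one, which completes the reduction.

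The one genuinely hard ingredient is Gabber's Lemma~\ref{L:circles}, which I am treating as a black box; with it in hand, everything else is routine, the only step requiring real attention being the passage at the very end from strictly affinoid algebras to general Berkovich affinoid algebras in the assertion that a reduced affinoid algebra is uniform.
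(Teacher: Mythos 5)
Your proof is correct and follows the same path as the paper's: reduce to showing that $\calM(A)$ is a single point, argue by contradiction, and invoke Gabber's Lemma~\ref{L:circles} to show that $\calM(A)$ covers a nondegenerate arc in $\calM(F\{T\})$. The only difference is in the endgame: the paper lifts a single rigid point, concludes $P(T)=0$ in $A$, and then deduces $\rho\sigma=1$ from the fact that $F[T]/(P)\subseteq A$ is a nonarchimedean field, whereas you observe that every point in the image of $f^*$ has kernel equal to the fixed ideal $\ker(f)$ and derive the contradiction directly by comparing Gauss-norm points (zero kernel) with rigid points (nonzero kernel) --- a marginally cleaner packaging of the same idea.
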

\begin{proof}
Since $A$ is reduced, it is uniform \cite[Proposition~2.1.4]{berkovich1};
we may thus equip $A$ with its spectral norm.
Suppose that there exists some nonzero $T \in A$ such that the quantities $\rho := \left| T \right|$,
$\sigma := \left| T^{-1} \right|$ do not satisfy $\rho \sigma = 1$. We may then apply Lemma~\ref{L:circles} to find some quantity $\tau \in [\sigma^{-1}, \rho]$ in the divisible closure of $\left| F^\times \right|$ such that $\calM(A)$ covers the circle $\left| T \right| = \rho$ in the Berkovich analytic affine $T$-line over $F$.
Inside this circle, we may choose a point $\alpha$ corresponding to the rigid-analytic point at which some irreducible polynomial $P \in F[T]$ vanishes. But now for any lift $\beta \in \calM(A)$ of $\alpha$, we must have $\beta(P(T)) = 0$ and so $P(T)$ cannot be a unit in $A$. We must therefore have $P(T) = 0$ in $A$; however, in this case the image of $F[T]$ in $A$ would be a finite extension of $F$ and hence a nonarchimedean field, and so we would have $\rho \sigma = 1$, a contradiction.

From the previous paragraph, we see that $\left| T \right| \left| T^{-1} \right|^{-1} = 1$ for all nonzero $T \in A$. This implies that $\calM(A)$ consists of a single point, so we may apply Lemma~\ref{L:point to field}
to deduce that $A$ is a nonarchimedean field.
\end{proof}

For a slightly more exotic example, consider the following example of a Banach field which is not a nonarchimedean field, but whose spectrum is again reduced to a single point.
\begin{example} \label{exa:Banach field}
Let $\alpha$ be the Gauss norm on the rational function field $F(T_1,T_2, \dots)$ in countably many variables. 
Define the function $f: F(T_1,T_2,\dots) \to \ZZ$ taking $x$ to the smallest nonnegative integer $k$ such that $x \in F(T_1,\dots,T_k)$.
Let $A$ be the completion of $F(T_1, T_2, \dots)$ for the norm 
\[
\left| x \right| = \inf\{ \max\{2^{f(x_i)} \alpha(x_i): i=1,\dots,n\}: x = x_1 + \cdots + x_n \}.
\]
For $x \in F(T_1,T_2,\dots)$, we have $\left| x \right|_{\spect} = \alpha(x)$, so the restriction of $\left| \bullet \right|_{\spect}$ to $F(T_1,T_2,\dots)$ equals the multiplicative norm $\alpha$.
Hence $A^u$ equals the completion of $F(T_1,T_2,\dots)$ with respect to $\alpha$,
which by Lemma~\ref{L:field completion} is a nonarchimedean field.

Let $A_k$ be the completion of $F(T_1,\dots,T_k)$ with respect to $\alpha$,
or equivalently with respect to $\left| \bullet \right|$.
Let $B_k$ be the ring of formal sums $\sum_{n \in \ZZ} a_n T_{k+1}^n$ with $a_n \in A_k$
such that $\alpha(a_n)$ remains bounded as $n \to \infty$ and tends to 0 as $n \to -\infty$; this ring is complete for the (multiplicative) Gauss norm. Identify $F[T_1,\dots,T_{k+1}]$ with $F[T_1,\dots,T_k][T_{k+1}]$, then map the latter to $A_k[T_{k+1}]$ and on to $B_k$; the resulting map carries every nonzero element of $F[T_1,\dots,T_{k+1}]$ to a unit in $B_k$. We thus obtain an isometric ring embedding $A_{k+1} \to B_k$. The composition $A_k \to A_{k+1} \to B_k$ is split by the projection map $B_k \to A_k$ of $A_k$-modules taking $\sum_{n \in \ZZ} a_n T_{k+1}^n$ to $a_0$.
The compositions $A_{k+1} \to B_k \to A_k$ are submetric with respect to both $\alpha$ and $\left| \bullet \right|$; by chaining these together, we get a compatible family of submetric projections $A_n \to A_k$ for all $n \geq k$, and by continuity also a projection $\pi_k: A \to A_k$.

For any $x \in F(T_1,T_2,\dots)$, we have
\begin{equation} \label{eq:monotone}
\left| \pi_1(x) \right| \leq \left| \pi_2(x) \right| \leq \cdots, \qquad
\left| \pi_1(x) \right|_{\spect} \leq \left| \pi_2(x) \right|_{\spect} \leq \cdots,
\end{equation}
and the sequence $\pi_1(x), \pi_2(x), \dots$ stabilizes at the constant value $x$.
By continuity, it follows that for any $x \in A$, \eqref{eq:monotone} holds and the sequence $\pi_1(x), \pi_2(x),\dots$ is a Cauchy sequence with limit $x$.
In particular, if $x \in A$ satisfies $\left| x \right|_{\spect} = 0$, then $\pi_k(x) = 0$ for all $k$,
so $x = 0$; that is, the map $A \to A^u$ is injective.
By Remark~\ref{R:single point2}, $\calM(A) = \calM(A^u)$ consists of the single point $\alpha$;
by Lemma~\ref{L:point to field}, we deduce that $A$ is a Banach field.

For each $k$, we have $\left| T_k \right| = 2^k$ while $\left| T_k \right|_{\spect} = 1$; hence $\left| \bullet \right|_{\spect}$ does not define the topology of $A$, so $A$ is not uniform. In particular, $A$ is not a nonarchimedean field.
\end{example}

Another example of a Banach field which is not uniform can be found in \cite[\S 8.3]{fujiwara-gabber-kato}.
We include a modification of this example suggested by Gabber, to show that the hypotheses in Lemma~\ref{L:top nil unit} cannot be weakened.
\begin{example} \label{exa:Q series}
Form the ring 
\[
A_0 := \left\{ \sum_{n=0}^\infty a_n T^n: a_n \in \frac{1}{(n!)^n} \ZZ \right\} \subseteq \QQ [T],
\]
let $A_1$ be the $T$-adic completion of $A_0$, and put $A := A_1[T^{-1}]$. For $x \in A$, let $\left| x \right|_A$ be the infimum of $e^{-n}$ over all $n \in \ZZ$ for which $T^{-n} x \in A_1$; with the topology induced by $\left| \bullet \right|$, $A$ is a Banach field. The unit ball for the spectral seminorm equals $A \cap \QQ \llbracket T \rrbracket$, which is not bounded under $\left| \bullet \right|_A$; hence $A$ is not uniform, and in particular not itself a nonarchimedean field.

Note that $\QQ \cdot A_0$ is not a bounded subset of $A$; consequently, the topology on $A$ cannot be defined
by any submultiplicative norm under which $\QQ$ is bounded. By Lemma~\ref{L:compatible norm}, it follows that $A$ is not a Banach algebra over any nonarchimedean field.
\end{example}

\begin{remark}
The Banach field $A$ constructed in \cite[\S 8.3]{fujiwara-gabber-kato} has the additional feature that $A \{T\}$ is not noetherian. In particular, $A$ cannot be a nonarchimedean field in light of the Hilbert basis theorem for Tate algebras over nonarchimedean fields \cite[Theorem~5.2.6/1]{bgr}; it also provides an explicit example of the failure of the general Hilbert basis theorem for commutative nonarchimedean Banach rings.
By contrast, we do not know whether or not $A\{T\}$ is noetherian in the case where $A$ is the field described in Example~\ref{exa:Banach field}.
\end{remark}

We leave the following question completely unaddressed.
\begin{question}
Does there exist an example of a Banach field whose uniform completion is not a nonarchimedean field? (Note that we do not require the uniform completion to itself be a Banach field.) Gabber points out that Escassut \cite{escassut} has conjectured the negative answer for Banach fields over a nonarchimedean field.
\end{question}

\section{Uniform Banach fields}

In light of the key role played in Example~\ref{exa:Banach field} by the failure of uniformity,
we consider the following question.

\begin{question} \label{Q:Banach field}
Is every uniform Banach field a nonarchimedean field?
\end{question}

We will only treat this question for Banach algebras over a nonarchimedean field, so let us immediately restrict to this case.

\begin{hypothesis} \label{H:Banach algebra}
Hereafter, let $A$ be a uniform Banach algebra over $F$, equipped with a norm given by Lemma~\ref{L:compatible norm}. Note that by Remark~\ref{R:single point1} the associated spectral seminorm is independent of any choices (except for the initial choice of the norm on $F$). Moreover, since $A$ is uniform, the spectral seminorm itself is a norm defining the topology of $A$. 
\end{hypothesis}

\begin{remark} \label{remark:field from spectrum}
By Lemma~\ref{L:point to field} and Remark~\ref{R:single point2},
$A$ is a nonarchimedean field if and only if $\calM(A)$ is a single point.
In particular, if $A$ is a uniform Banach field, then $A$ is a nonarchimedean field if and only if for every $t \in A^\times$, we have $\left|t \right|_{\spect} \left| t^{-1} \right|_{\spect} = 1$. Note that this condition may be checked within the completion of $F(t)$ inside $A$.
\end{remark}

With Remark~\ref{R:connected} and Remark~\ref{remark:field from spectrum}
in mind, one is naturally led to try to exhibit a negative answer
to Question~\ref{Q:Banach field} by completing $F(t)$ with respect to a connected set of norms. However, a straightforward attempt of this sort fails in an instructive way.
\begin{example} \label{exa:not Banach field}
Suppose that $\left |F^\times \right|$ is dense in $\RR^+$ (i.e., $F$ is not discretely valued).
Choose a closed interval $I = [\gamma, \delta] \subset (0, +\infty)$ of positive length.
Let $A$ be the completion of $F(t)$ with respect to the supremum of the $\rho$-Gauss norms
$\left| \bullet \right|_\rho$ for all $\rho \in I$. By construction, $A$ is a uniform Banach ring, and it can (but need not here) be shown that $\calM(A)$ is homeomorphic to $I$ via the map taking the $\rho$-Gauss norm to $\rho$. However, despite the fact that $A$ is the completion of the field $F(t)$, we will show that $A$ is not a Banach field.

Choose a strictly increasing sequence
$\rho_1, \rho_2, \dots$ in $\left| F^\times \right| \cap I$ with limit $\delta$,
then choose a sequence $m_1, m_2, \dots$ of positive integers such that
\[
\rho_{n-1}/\rho_n,
\rho_n/\rho_{n+1}
 \leq 2^{-n/m_n} \qquad (n>1).
\]
For $n \geq 1$, choose elements $\lambda_{n,1}, \dots, \lambda_{n,2m_n-1} \in F$ of norm $\rho_n$, write
\[
P_n(t) := \prod_{i=1}^{2m_n-1} (t - \lambda_{n,i}) = \sum_{j=0}^{2m_n-1} P_{n,j} t^j,
\]
and consider the following sequences $x_1,x_2,\dots$ and $y_1,y_2,\dots$ in $A$:
\[
x_n := \frac{\sum_{j=0}^{m_n-1} P_{n,j} t^j}{P_n(t)}, \qquad y_n := x_1 \cdots x_n.
\]
We make the following observations.
\begin{itemize}
\item
If $\rho < \rho_n$, then $\left| x_n \right|_\rho = 1$,
$\left| 1-x_n \right|_\rho \leq (\rho/\rho_n)^{m_n} < 1$.
\item
If $\rho > \rho_{n}$, then $\left| 1-x_n \right|_\rho = 1$,
$\left| x_n \right|_\rho \leq (\rho_n/\rho)^{m_n} < 1$. In particular, $\left| x_n \right|_\delta \leq (\rho_n/\delta)^{m_n} < (\rho_n/\rho_{n+1})^{m_n} \leq 2^{-n}$.
\item
If $\rho = \rho_n$, then $\left| x_n \right|_\rho = \left| 1-x_n \right|_\rho = 1$. Hence for all $n$,
\[
\left| x_n \right|_{\spect}, \left| 1-x_n \right|_{\spect}, \left| y_n \right|_{\spect} \leq 1.
\]
\item
For $n>1$ and $\rho \in [\gamma, \rho_{n-1}]$, 
we may write $y_n - y_{n-1} = y_{n-1}(x_n - 1)$ to obtain
\[
\left| y_n - y_{n-1} \right|_\rho \leq \left| 1 - x_n \right|_\rho \leq (\rho/\rho_n)^{m_n}
\leq (\rho_{n-1}/\rho_n)^{m_n} \leq2^{-n}.
\]
\item
For $n>3$ and $\rho \in [\rho_{n-1}, \delta]$,
we may write $y_n - y_{n-1} = y_{n-3} x_{n-1} (x_n - 1) x_{n-2}$ to obtain 
\[
\left| y_n - y_{n-1} \right|_\rho \leq \left| x_{n-2} \right|_\rho 
\leq (\rho_{n-2}/\rho)^{m_{n-2}} \leq (\rho_{n-2}/\rho_{n-1})^{m_{n-2}} \leq 2^{-n+2}.
\]
\end{itemize}
We now see that for all $n > 3$,
$\left| y_n - y_{n-1} \right|_{\spect}  \leq 2^{-n+2}$.
In particular, the sequence $y_1, y_2,\dots$ is Cauchy and so has a limit $y \in A$.
By construction, we have $\left| y_n \right|_{\delta} \to 0$ as $n \to \infty$, so
$\left| y \right|_{\delta} = 0$. On the other hand, for $\rho \in [\gamma, \delta)$, 
the sequence $\left| y_n \right|_\rho$ is eventually constant, so $\left| y \right|_\rho > 0$ and in particular $y \neq 0$. In particular, $y$ is neither zero nor a unit in $A$, and hence $A$ is not a Banach field.
\end{example}

\begin{remark} \label{R:not Banach field}
The proof of Theorem~\ref{T:no Banach field not locally compact} has its origins in the observation that the construction of Example~\ref{exa:not Banach field} is quite robust.
For example, suppose that $F$ has infinite residue field, and replace $A$ by the completion of $F(t)$ with respect to the supremum of the 
$\rho$-Gauss norms $\left| \bullet \right|_\rho$ for all $\rho \in I$
plus the $(\mu \rho)$-Gauss norms on $F(t-\lambda)$ for all $\rho \in I, \lambda \in F$ with $\left| \lambda \right| = \rho$ and all $\mu \in [\frac{1}{2}, 1]$.
In the construction of $x_n$, add the restriction that for each $n$, the ratios
$\lambda_{n,i}/\lambda_{n,1}$ for $i=1,\dots,2m_n-1$ represent distinct elements of the residue field of $F$.
Then 
\[
\left| x_n \right|_\rho \begin{cases} = (\rho/\rho_n)^{m_n} & (\rho < \rho_n) \\
\leq 2 & (\rho = \rho_n) \\
= (\rho_n/\rho)^{m_n} & (\rho > \rho_n), \end{cases}
\]
from which it follows that again the sequence $y_1,y_2,\dots$ converges to an element $y \in A$ which is neither zero nor a unit. As the details are quite similar to the proof of Theorem~\ref{T:no Banach field not locally compact}, we omit them here.
\end{remark}

By carrying this reasoning further, we obtain a substantial partial answer to Question~\ref{Q:Banach field}.

\begin{lemma} \label{L:interval}
Suppose that $A$ (which by Hypothesis~\ref{H:Banach algebra} is a uniform Banach algebra over $F$)
is a Banach field but not a nonarchimedean field. 
Choose any $c>1$ such that $(1, c] \cap \left| F^\times \right| \neq \emptyset$.
Then there exist:
\begin{itemize}
\item
a nonzero element $t \in A$ with $\left| t \right|_{\spect} \left| t^{-1} \right|_{\spect} > 1$;
\item
values $\gamma, \delta$ with
$\left| t^{-1} \right|_{\spect}^{-1} \leq \gamma < \delta \leq \left| t \right|_{\spect}$
and $\left| F^\times \right| \cap [\gamma, \delta] \neq \emptyset$;
\end{itemize}
such that for every $\lambda \in F$ with $\left|\lambda \right| \in [\gamma, \delta]$,
$\left| \lambda \right| \left|( t - \lambda)^{-1} \right|_{\spect} \leq c^{2}$.
\end{lemma}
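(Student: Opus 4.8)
The plan is to argue by contradiction: assuming that no triple $(t,\gamma,\delta)$ as in the statement exists, I will manufacture an element of $A$ that is simultaneously nonzero — hence a unit, since $A$ is a field — and contained in $\ker(\alpha)$ for some $\alpha \in \calM(A)$, which is absurd. This is morally the mechanism of Example~\ref{exa:not Banach field} run in reverse: there a limiting construction produces a genuine non-unit, whereas here the field hypothesis turns the same kind of limiting construction into a contradiction.

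First I would fix a starting point. By Remark~\ref{remark:field from spectrum}, since $A$ is not a nonarchimedean field there is $u \in A^\times$ with $|u|_{\spect}|u^{-1}|_{\spect} > 1$; replacing $u$ by a power $u^N$ multiplies $\log(|u|_{\spect}|u^{-1}|_{\spect})$ by $N$, so I may assume $|u|_{\spect}|u^{-1}|_{\spect} > c$. Write $u_0 := u$, $\rho := |u_0|_{\spect}$, and fix $q \in |F^\times| \cap (1,c]$, which exists by the hypothesis on $c$. For any unit $v \in A$, note using \eqref{eq:spectrum} and multiplicativity of the elements of $\calM(A)$ that $|v^{-1}|_{\spect}^{-1} = \min_{\alpha \in \calM(A)} \alpha(v)$, with the minimum attained.

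Next I would build inductively a sequence $u_0, u_1, \dots$ of units in $A$ with $|u_k|_{\spect} = \rho$ for all $k$ and with $i_k := |u_k^{-1}|_{\spect}^{-1} \to 0$ geometrically. Given $u_k$ with $|u_k|_{\spect} = \rho$ and $\rho/i_k = |u_k|_{\spect}|u_k^{-1}|_{\spect} > c$, the interval $[\gamma,\delta] := [i_k, q i_k]$ is an admissible choice of $(\gamma,\delta)$ for $u_k$ in the sense of the statement: it lies in $[i_k, \rho]$ because $q i_k \le c i_k < \rho$, it is nondegenerate because $q > 1$, and it meets $|F^\times|$ because an interval of ratio $q \in |F^\times|$ contains a power of $q$. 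By the contradiction hypothesis there is $\lambda_k \in F$ with $|\lambda_k| \in [i_k, q i_k]$ and $|\lambda_k|\,|(u_k - \lambda_k)^{-1}|_{\spect} > c^2$; put $u_{k+1} := u_k - \lambda_k$. Since $|\lambda_k| \le q i_k < \rho$ while $\alpha \mapsto \alpha(u_k)$ attains the value $\rho$, the ultrametric inequality gives $|u_{k+1}|_{\spect} = \rho$; and $i_{k+1} = |(u_k-\lambda_k)^{-1}|_{\spect}^{-1} < |\lambda_k|/c^2 \le i_k/c$, so $|u_{k+1}|_{\spect}|u_{k+1}^{-1}|_{\spect} = \rho/i_{k+1} > c\cdot(\rho/i_k) > c$ and the induction continues. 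In particular $i_k < i_0 c^{-k}$ and $|\lambda_k| \le q i_k < q i_0 c^{-k}$, with the $|\lambda_k|$ strictly decreasing.

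Finally I would take the limit. Since $F$ is complete and $|\lambda_k| \to 0$, the series $\Lambda := \sum_{k\ge 0} \lambda_k$ converges in $F$, and $u_k = u_0 - \sum_{j<k}\lambda_j \to u_0 - \Lambda$ in $A$ with respect to $|\cdot|_{\spect}$, which defines the topology because $A$ is uniform and which restricts to $|\cdot|_F$ on $F$ (Lemma~\ref{L:compatible norm}, Hypothesis~\ref{H:Banach algebra}). Now $u_0 \notin F$ — otherwise $|u_0|_{\spect}|u_0^{-1}|_{\spect} = |u_0|_F|u_0^{-1}|_F = 1$ — so $u_0 - \Lambda \ne 0$, hence a unit, hence $\min_\alpha \alpha(u_0 - \Lambda) > 0$. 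On the other hand $|\alpha(u_k) - \alpha(u_0-\Lambda)| \le \alpha(u_k - u_0 + \Lambda) \le |u_k - u_0 + \Lambda|_{\spect} \to 0$ uniformly over the compact space $\calM(A)$, so $\min_\alpha\alpha(u_0-\Lambda) = \lim_k \min_\alpha \alpha(u_k) = \lim_k i_k = 0$, the desired contradiction. I expect the main obstacle to be precisely the bookkeeping in the inductive step: one must place the interval at the \emph{bottom} edge $i_k$ of the annulus of $u_k$ so that the forced shift satisfies $|\lambda_k| \le c\, i_k$, and this single constraint is what simultaneously forces geometric decay of the inner radii $i_k$ (keeping each annulus nondegenerate so the recursion survives) and of the shifts $\lambda_k$ (so that $\Lambda$ converges in $F$); verifying that $[i_k, q i_k]$ is genuinely admissible and that $|u_k|_{\spect}$ stays pinned at $\rho$ are the remaining points requiring care.
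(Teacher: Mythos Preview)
Your proof is correct and follows essentially the same approach as the paper's: assume no admissible triple exists, pick $t_0$ with $|t_0|_{\spect}|t_0^{-1}|_{\spect}>c$, and iteratively shift by elements $\lambda_k\in F$ (produced by the negated conclusion applied to an interval at the bottom of the annulus) so that the inner radii $i_k=|u_k^{-1}|_{\spect}^{-1}$ decay geometrically; the shifts sum to some $\Lambda\in F$, and the contradiction is that $u_0-\Lambda$ is a nonzero unit while $\min_\alpha\alpha(u_0-\Lambda)=\lim_k i_k=0$. The only cosmetic differences are that the paper uses the interval $[\gamma_n,c\gamma_n]$ rather than your $[i_k,q i_k]$ with $q\in|F^\times|\cap(1,c]$ (you are simply more explicit about why it meets $|F^\times|$), and the paper phrases the endgame via continuity of inversion (Remark~\ref{R:inverse continuous}) rather than via uniform convergence of $\alpha\mapsto\alpha(u_k)$ on $\calM(A)$.
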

\begin{proof}
We will assume the contrary and derive a contradiction. 
Since $A$ is not a nonarchimedean field,
there must exist some nonzero $t_0 \in A$ for which $\left| t_0 \right|_{\spect} \left| t_0^{-1} \right|_{\spect} \neq 1$; by replacing $t_0$ with a suitable power, we may further ensure that $\left| t_0 \right|_{\spect} \left| t_0^{-1} \right|_{\spect} > c$. 
Put $\gamma_0 := \left| t_0^{-1} \right|_{\spect}^{-1}$, $\delta_0 := c \gamma_0$.

For $n=0,1,\dots$, we construct elements $\mu_n \in F$ 
and real numbers $\gamma_n, \delta_n$ with $\gamma_{n+1} < \gamma_n/c$ 
such that for $t_n = t_0 - \mu_n$,
we have
\[
\left| t_n^{-1} \right|_{\spect}^{-1} \leq \gamma_n,
\qquad c \gamma_n = \delta_n, \qquad \delta_n < \left| t_n \right|_{\spect}.
\]
To begin with, put $\mu_0 = 0$ and consider $\gamma_0, \delta_0$ as above.
Given $\mu_n,\gamma_n, \delta_n$ for some $n$,
note that by hypothesis, the conditions of the lemma do not hold for $t = t_n$, $\gamma = \gamma_n$, $\delta = \delta_n$;
that is, there exists $\lambda_n \in F$ with $\left| \lambda_n \right| \in [\gamma_n, \delta_n]$ such that
$\left| \lambda_n \right| \left|(t_n - \lambda_n)^{-1} \right|_{\spect} > c^{2}$.
Put 
\[
\mu_{n+1} := \mu_n + \lambda_n, \qquad
\gamma_{n+1} := \left| t_{n+1}^{-1} \right|^{-1}_{\spect},
\qquad
\delta_{n+1} := c \gamma_{n+1}.
\]
From the construction, we have $\gamma_{n+1} < \delta_n/c^2 = \gamma_n/c$, so 
$\delta_{n+1} \leq \gamma_n < \delta_n$. Since $\left| t_{n+1} \right|_{\spect} = 
\left| t_n \right|_{\spect}$, the elements $\mu_{n+1}, \gamma_{n+1}, \delta_{n+1}$ have the desired properties.

Since $\left| \lambda_n \right| \leq \delta_n$ and both $\gamma_n$ and $\delta_n$ tend to 0 as $n \to \infty$, the sequence $\{\mu_n\}_{n=0}^\infty$ converges to a limit $\mu \in F$.
We cannot have $t_0 = \mu$, as this would imply $t_0 \in F$ and hence
$\left| t_0 \right|_{\spect} \left| t_0^{-1} \right|_{\spect} = 1$.
Consequently, $t_0 - \mu$ is a unit in $A$, and so $\{(t_0 - \mu_n)^{-1}\}_{n=0}^\infty$ converges to $(t_0 - \mu)^{-1}$.
However, the sequence $\left| (t_0 - \mu_n)^{-1} \right|_{\spect}$ does not converge to
$\left| (t_0 - \mu)^{-1} \right|_{\spect}$; instead, it diverges to $+\infty$. This contradiction yields the desired result.
\end{proof}

\begin{theorem} \label{T:no Banach field not locally compact}
Suppose that $\left| F^\times \right|$ is dense in $\RR^+$.
If $A$ is a uniform Banach field, then $A$ is a nonarchimedean field.
\end{theorem}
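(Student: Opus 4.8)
The plan is to argue by contradiction: suppose $A$ is a uniform Banach field but not a nonarchimedean field. Since $|F^\times|$ is dense we may fix $c>1$ with $(1,c]\cap|F^\times|\neq\emptyset$ and invoke Lemma~\ref{L:interval}, obtaining $t\in A$ with $|t|_{\spect}\,|t^{-1}|_{\spect}>1$, together with $\gamma<\delta$ in $[\,|t^{-1}|_{\spect}^{-1},|t|_{\spect}\,]$ having $|F^\times|\cap[\gamma,\delta]\neq\emptyset$, such that $|\lambda|\,|(t-\lambda)^{-1}|_{\spect}\leq c^{2}$ for all $\lambda\in F$ with $|\lambda|\in[\gamma,\delta]$. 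The first step is to reformulate this in $\calM(A)$: every $\beta\in\calM(A)$ restricts to $|\bullet|_F$ on $F$ and satisfies $\beta\leq|\bullet|_{\spect}$, so $\beta(t-\lambda)\geq|\lambda|/c^{2}$ whenever $|\lambda|\in[\gamma,\delta]$; and since $\calM(A)$ is connected (Remark~\ref{R:connected}), the continuous map $\beta\mapsto\beta(t)$ carries $\calM(A)$ onto $[\,|t^{-1}|_{\spect}^{-1},|t|_{\spect}\,]$.

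Next I would carry out a robust version of the construction in Example~\ref{exa:not Banach field} (cf.\ Remark~\ref{R:not Banach field}), producing a sequence $y_n=x_1\cdots x_n$ in $A$ that is Cauchy for the spectral norm (which defines the topology of $A$ since $A$ is uniform). I take $x_n=N_n(t)/P_n(t)\in A$ with $P_n(T)=\prod_{i=1}^{M_n}(T-\lambda_{n,i})$, where the $\lambda_{n,i}\in F$ have \emph{pairwise distinct} norms $\rho_n=\rho_{n,1}<\cdots<\rho_{n,M_n}=\rho_n'$ filling a short subinterval $[\rho_n,\rho_n']\subseteq[\gamma,\delta]$ (possible because $|F^\times|$ is dense), and $N_n=\sum_{j\le d_n}P_{n,j}T^{j}$ is a truncation of $P_n$. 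The distinctness of the norms $\rho_{n,i}$ is the crucial device substituting for the ``distinct residues'' trick of Remark~\ref{R:not Banach field}, which is unavailable here since $F$ need not have infinite residue field: at any $\beta$, at most one factor $t-\lambda_{n,i}$ has $|\lambda_{n,i}|=\beta(t)$, so the crude estimate from Lemma~\ref{L:interval} is needed only for that single factor and costs merely a bounded factor $c^{2}$ in $\beta(P_n(t))$, rather than $c^{O(M_n)}$. A short ultrametric computation with elementary symmetric functions then yields, uniformly in $\beta\in\calM(A)$: $\beta(x_n)\le c^{2}$; moreover $\beta(x_n)=1$ and $\beta(x_n-1)\le(\beta(t)/\rho_n)^{d_n+1}$ if $\beta(t)<\rho_n$, while $\beta(x_n-1)=1$ and $\beta(x_n)\le(\rho_n'/\beta(t))^{M_n-d_n}$ if $\beta(t)>\rho_n'$.

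With these inequalities I would then fix the remaining data recursively: an increasing chain $\rho_1<\rho_1'<\rho_2<\rho_2'<\cdots$ inside $[\gamma,\delta]$ converging to some $\rho^{*}$ with $|t^{-1}|_{\spect}^{-1}<\rho_1$ and $\rho^{*}<|t|_{\spect}$, and integers $d_n$, $M_n-d_n$ growing quickly enough against $\log(\rho_n/\rho_{n-1}')$ so that $\beta(y_n-y_{n-1})=\beta(y_{n-1})\,\beta(x_n-1)\le 2^{-n}$ for all $\beta$. Disjointness of the intervals $[\rho_k,\rho_k']$ keeps this bookkeeping honest: $\beta(y_{n-1})$ absorbs at most one factor $c^{2}$ and is otherwise $\le1$, and is exponentially small whenever $\beta(t)$ lies near $\rho_n$. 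The limit $y\in A$ then has $\beta(y)=1$ for every $\beta$ with $\beta(t)<\rho_1$, while $\beta(y)=\prod_n\beta(x_n)\le\prod_n(\rho^{*}/\beta(t))^{M_n-d_n}=0$ for every $\beta$ with $\beta(t)>\rho^{*}$; by the surjectivity noted above, both kinds of $\beta$ exist. Hence $y\neq0$, but $\beta(y)=0$ for some $\beta\in\calM(A)$ --- impossible since $y$, being a nonzero element of the field $A$, is a unit and therefore satisfies $\beta(y)\beta(y^{-1})=1$. This contradiction proves the theorem.

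The step I expect to be the main obstacle is the uniform control of $\beta(x_n)$ over all of $\calM(A)$ on the ``transition circle'' $\beta(t)=\rho_n$, where the individual factors $t-\lambda_{n,i}$ can a priori become small; handling it is precisely what forces the distinct-norms construction (powered by the dense value group) together with the uniform lower bound distilled from Lemma~\ref{L:interval}. The secondary technical nuisance is calibrating the growth of $\rho_n\to\rho^{*}$ against that of $d_n$ and $M_n-d_n$ so that all the recursive constraints --- the Cauchy estimate, the ``$\beta(y)=1$ at the bottom'' and ``$\beta(y)=0$ at the top'' conditions, and membership of the norms $\rho_{n,i}$ in $[\gamma,\delta]$ --- can be met simultaneously.
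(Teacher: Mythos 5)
Your proposal is correct and follows essentially the same approach as the paper's proof: invoke Lemma~\ref{L:interval}, build $x_n$ as a truncation of $P_n(t)=\prod_i(t-\lambda_{n,i})$ divided by $P_n(t)$ with the $\lambda_{n,i}\in F$ chosen to have pairwise distinct norms (so the $c^2$ loss from Lemma~\ref{L:interval} hits at most one factor), and show $y_n=x_1\cdots x_n$ converges to a nonzero element of $A$ annihilated by some $\beta\in\calM(A)$. Your explicit appeal to connectedness of $\calM(A)$ to get surjectivity of $\beta\mapsto\beta(t)$ onto $[\,|t^{-1}|_{\spect}^{-1},|t|_{\spect}\,]$ is the same step the paper uses implicitly when it evaluates $|y|_\delta$.
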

\begin{proof}
Suppose by way of contradiction that $A$ is a uniform Banach field but not a nonarchimedean field.
Set notation as in Lemma~\ref{L:interval}; we will use this framework to carry out a variant of Example~\ref{exa:not Banach field} in the manner of Remark~\ref{R:not Banach field}.
Choose a strictly increasing sequence $\rho_1, \rho_2, \dots$ in $[\gamma, \delta]$ with limit $\delta$.
Choose a nondecreasing sequence $m_0, m_1, \dots$ of positive integers such that
\[
\rho_{n-1}/\rho_n,
\rho_n/\rho_{n+1}
 \leq 2^{-n/m_{n-1}} \qquad (n>1).
\]
Choose $\rho_1^-, \rho_1^+, \rho_2^-, \rho_2^+ \dots$ with 
$\rho_1^- < \rho_1 < \rho_1^+ < \rho_2^- < \rho_2 < \cdots$ and
\[
\rho_n/\rho_n^-, \rho_n^+/\rho_n \leq 2^{1/m_n}.
\]
For $n \geq 1$, choose elements $\lambda_{n,1}, \dots, \lambda_{n,2m_n-1} \in F$ whose norms are pairwise distinct elements of $[\rho_n^-, \rho_n^+]$.
Write
\[
P_n(t) := \prod_{i=1}^{2m_n-1} (t - \lambda_{n,i}) = \sum_{j=0}^{2m_n-1} P_{n,j} t^j,
\]
and consider the following sequences $x_1,x_2,\dots$ and $y_1,y_2,\dots$ in $A$:
\[
x_n := \frac{\sum_{j=0}^{m_n-1} P_{n,j} t^j}{P_n(t)}, \qquad y_n := x_1 \cdots x_n.
\]
For $\alpha \in \calM(A)$ with $\alpha(t) = \rho$, we make the following observations. 
\begin{itemize}
\item
If $\rho < \rho_n^-$, then with respect to $\alpha$, the sums $\sum_{j=0}^{m_n-1} P_{n,j} t^j$
and $\sum_{j=m_n}^{2m_n-1} P_{n,j} t^j$ are dominated by the summands with smallest $j$,
while $t-\lambda_{n,i}$ is dominated by $\lambda_{n,i}$.
Consequently,
$\alpha( x_n) = 1$ and
$\alpha(1-x_n) \leq (\rho/\rho_n^-)^{m_n} < 1$.
\item
If $\rho > \rho_n^+$, then with respect to $\alpha$, the sums $\sum_{j=0}^{m_n-1} P_{n,j} t^j$
and $\sum_{j=m_n}^{2m_n-1} P_{n,j} t^j$ are dominated by the summands with largest $j$,
while $t-\lambda_{n,i}$ is dominated by $t$. 
Consequently, $\alpha( 1-x_n ) = 1$ and 
$\alpha( x_n ) \leq (\rho_n^+/\rho)^{m_n} < 1$.
In particular, $\left| x_n \right|_\delta \leq (\rho_n^+/\delta)^{m_n}
< (\rho_n^+/\rho_{n+1})^{m_n} \leq 2^{-n+1}$.
\item
If $\rho \in [\rho_n^-, \rho_n^+]$, then 
\[
\alpha \left( \sum_{j=0}^{m_n-1} P_{n,j} t^j \right), 
\alpha \left( \sum_{j=m_n}^{2m_n-1} P_{n,j} t^j \right) \leq (\rho_n^+)^{2m_n-1}.
\]
For $i=1,\dots,2m_n-1$, by Lemma~\ref{L:interval} we have
\[
\alpha((t-\lambda_{n,i})^{-1}) \leq \begin{cases} \max\{\rho^{-1}, \left| \lambda_{n,i} \right|^{-1}\}
& \rho \neq \left| \lambda_{n,i} \right| \\
c^2 \left| \lambda_{n,i} \right|^{-1} & \rho = \left| \lambda_{n,i} \right|;
\end{cases}
\]
moreover, the second case can occur for at most one value of $i$. Combining, we obtain
\[
\alpha( x_n ), \alpha(1-x_n), \alpha(y_n) \leq (\rho_n^+/\rho_n^-)^{2m_n-1} c^2 \leq 16 c^2.
\]
\item
For $n>1$ and $\rho \leq \rho_{n-1}$, 
we may write $y_n - y_{n-1} = y_{n-1}(x_n - 1)$ to obtain 
\begin{align*}
\alpha( y_n - y_{n-1} ) &\leq 16c^2 \alpha( 1 - x_n ) \\
&\leq 16c^2 (\rho/\rho_n^-)^{m_n} \\
&\leq 16c^2(\rho_{n-1}/\rho_n)^{m_n} (\rho_n/\rho_n^-)^{m_n} \\
&\leq 2^{-n+5}c^2.
\end{align*}
\item
For $n>3$ and $\rho \geq \rho_{n-1}$,
we may write $y_n - y_{n-1} = y_{n-3} x_{n-1} (x_n - 1) x_{n-2}$ to obtain 
\begin{align*}
\alpha( y_n - y_{n-1}) &\leq (16c^2)^3 \alpha( x_{n-2} ) \\
&
\leq (16c^2)^3 (\rho_{n-2}^+/\rho)^{m_{n-2}} \\
&\leq (16c^2)^3 (\rho_{n-2}/\rho_{n-1})^{m_{n-2}} 
(\rho_{n-2}^+/\rho_{n-2})^{m_{n-2}} \\
&\leq 2^{-n+14} c^6.
\end{align*}
\end{itemize}
We now see that for all $n > 3$,
$\left| y_n - y_{n-1} \right|_{\spect}  \leq 2^{-n+14} c^6$.
In particular, the sequence $y_1, y_2,\dots$ is Cauchy and so has a limit $y \in A$.
By construction, we have $\left| y_n \right|_{\delta} \to 0$ as $n \to \infty$, so
$\left| y \right|_{\delta} = 0$. On the other hand, for $\rho \in [\gamma, \delta)$, 
the sequence $\left| y_n \right|_\rho$ is eventually constant, so $\left| y \right|_\rho > 0$ and in particular $y \neq 0$. In particular, $y$ is neither zero nor a unit in $A$, yielding the desired contradiction.
\end{proof}

\begin{remark} \label{R:maximal quotient}
For $A$ a Banach ring, any maximal ideal $\frakm$ of $A$ is closed \cite[Corollary~1.2.4/5]{bgr} (see also \cite[Lemma~2.2.2]{part1}), so $A/\frakm$ (topologized using the quotient norm)
is a Banach field. If $A$ is a uniform Banach algebra over a nonarchimedean field $F$ such that $\left| F^\times \right|$ is dense in $\RR^+$, 
then $A/\frakm$ is again a Banach algebra over $F$, but it need not be uniform;
consequently, Theorem~\ref{T:no Banach field not locally compact} does not  imply that $A/\frakm$ is a nonarchimedean field.
\end{remark}

\begin{remark} \label{R:remarks on discrete case}
Suppose that $A$ is a uniform Banach field which is not a nonarchimedean field,
but $F$ is discretely valued. By Lemma~\ref{L:point to field}, $\calM(A)$ contains more than one point.
For any finite extension $E$ of $F$, $E \otimes_F A$ splits as a finite direct sum, each term of which is again a Banach field which is not a nonarchimedean field. On the other hand, if $E$ is the completion of a tower of finite extensions
$F = E_0 \subseteq E_1 \subseteq \cdots$ and is not discretely valued, then 
the maps $\calM(E_{i+1} \otimes_F A) \to \calM(E_i \otimes_F A)$ are all surjective;
the uniform completion of $E \otimes_F A$ 
(i.e., the completed direct limit of the $E_n \otimes_F A$ for their spectral norms)
has spectrum equal to $\varprojlim_i \calM(E_i \otimes_F A)$, which then surjects onto $\calM(A)$ and thus also contains more than one point. By
Theorem~\ref{T:no Banach field not locally compact}, the uniform completion of $E \otimes_F A$ 
cannot be a Banach field.
One might hope that this observation can be used to extend Theorem~\ref{T:no Banach field not locally compact} to the case where $F$ is discretely valued, but we were unable to do so.
(Beware that even in this case, it is not clear that the ordinary completion of $E \otimes_F A$ is itself uniform.)

On a similar note, if $\rho \in \RR^+$ is not in the divisible closure of $\left| F^\times \right|$, then $F\{T/\rho, T^{-1}/\rho^{-1}\}$ is a nonarchimedean field which is not discretely valued, so Theorem~\ref{T:no Banach field not locally compact} implies that
$A\{T/\rho, T^{-1}/\rho^{-1}\}$ cannot be a Banach field. Note that by the latter, we mean the completion of $A[T^{\pm}]$ for the weighted Gauss norm with $\left| T \right| = \rho$, which is also the quotient of the completion of $A[T,U]$ for the weighted Gauss norm with $\left| T \right| = \rho$, $\left| U \right| = \rho^{-1}$ by the ideal $(TU-1)$. (The principal content of this statement is that the ideal is closed; see \cite[Lemma~1.5.26]{kedlaya-aws}.)

In the same context, let $S$ be the set of $\rho > 0$ which occur as $\left| T \right|_{\spect}$ for some $T \in A^\times$ for which $\left| T^{-1} \right|_{\spect} = \rho^{-1}$;
this is a group containing $\left| F^\times \right|$. By Theorem~\ref{T:no Banach field not locally compact}, $S$ cannot contain any element not in the divisible closure of $\left| F^\times \right|$.
(Namely, for any $\rho$ in the intersection arising from $T \in A$, $A$ contains the subring $F\{T/\rho,U/\rho^{-1}\}/(TU-1)$, which as per Remark~\ref{R:nondiscrete field construction} is a nonarchimedean field which is not discretely valued.)
However, this argument does not suffice to show in addition that $S$ cannot have infinite index over $\left| F^\times \right|$.
\end{remark}

\begin{remark}
Suppose that $F$ is algebraically closed.
In this case, in light of Lemma~\ref{L:point to field}, one may deduce the conclusion of 
Theorem~\ref{T:no Banach field not locally compact}  from an unpublished result of the 1973 PhD thesis of Guennebaud \cite[Proposition~IV.1]{guennebaud}:
if $A$ is a uniform Banach ring which contains a dense subfield containing $F$
and $\calM(A)$ consists of more than one point, then 
$A$ contains a zero-divisor.
(Thanks to Gabber for providing this reference.)
\end{remark}

Regarding the case where $F$ is not discretely valued, we mention the following result of Mihara \cite[Theorem~3.7]{mihara}.
\begin{theorem}[Mihara] \label{T:mihara}
For $A$ a uniform Banach field over $F$, for each $f \in A$ the ring $A\{f\} := A\{T\}/(T-f)$ is spectrally reduced; that is, the spectral norm on $A\{f\}$ is a norm. (However, it is not guaranteed that $A\{f\}$ 
is either uniform or a Banach field.)
\end{theorem}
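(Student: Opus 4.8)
The plan is to describe $\calM(A\{f\})$ explicitly, reduce the statement to a comparison of two norms on the dense subring $A$, and then settle that comparison by an explicit approximation argument exploiting that $A$ is a field.

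First I would identify the spectrum. Since $A\{T\}$ is a uniform Banach algebra over $F$, the map $\calM(A\{T\}) \to \calM(A)$ is surjective with fiber over $\alpha$ the closed unit disc over the completed residue field $\calH(\alpha)$ (a nonarchimedean field by Lemma~\ref{L:field completion}); imposing $T = f$ singles out, for each $\alpha$ with $\alpha(f) \leq 1$, the unique point $\beta_\alpha$ that reduces coefficients via $\alpha$ and evaluates at $T = f_\alpha$. Thus $\calM(A\{f\})$ is identified with $U := \{\alpha \in \calM(A) : \alpha(f) \leq 1\}$, with $\beta_\alpha$ restricting on $A$ to $\alpha$. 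If $U = \emptyset$, then $T - f$ is nowhere zero on $\calM(A\{T\})$ and hence a unit (invertibility criterion \cite[Corollary~1.2.4]{berkovich1}, valid as $A\{T\}$ is a Banach algebra over $F$), so $A\{f\} = 0$; if $f \in A^\circ$, then power-boundedness makes the division $\tilde a = (T-f)q + \tilde a(f)$ converge in $A\{T\}$ for every $\tilde a$, so $A \to A\{f\}$ is an isomorphism of Banach rings and $A\{f\}$ is $A$ with its spectral norm. In both cases the conclusion is immediate, so I may assume $\emptyset \neq U \subsetneq \calM(A)$.

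Next I would pass to the dense subring $A$. Because $A[T]$ maps onto a dense subring of $A\{f\}$ and $T$ lands in the image of $A$, the map $A \to A\{f\}$ has dense image, and since $A$ is a field with $A\{f\} \neq 0$ it is injective. So $A\{f\}$ is the completion of $A$ for the residue norm $\left| b \right|_\dagger := \inf_{h \in A\{T\}} \left| b + (T-f)h \right|_{\spect, A\{T\}}$, which satisfies $\left| b \right|_\dagger \geq \left| b \right|_{\spect,U} := \max_{\alpha \in U} \alpha(b)$, while the spectral seminorm of $A\{f\}$ restricts on $A$ to $\left| \bullet \right|_{\spect,U}$. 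Now let $a \in A\{f\}$ with $\left| a \right|_{\spect} = 0$ and fix a lift $\tilde a = \sum_n c_n T^n \in A\{T\}$. Truncating at degree $N$ and evaluating at $T = f$ gives $b_N := \sum_{n \leq N} c_n f^n \in A$ with $\overline{b_N} \to a$ in $A\{f\}$; and since $\tilde a(f_\alpha) = \beta_\alpha(a) = 0$ for $\alpha \in U$, one gets $\left| b_N \right|_{\spect,U} \leq \max_{n > N} \left| c_n \right|_{\spect} \to 0$. As $\left| a \right|_\dagger = \lim_N \left| b_N \right|_\dagger$, the theorem reduces to the claim that, for $b \in A$, $\left| b \right|_\dagger \to 0$ whenever $\left| b \right|_{\spect,U} \to 0$. (It would suffice to prove $\left| b \right|_\dagger = \left| b \right|_{\spect,U}$; but that would make $A\{f\}$ uniform, which is not asserted, so the weaker form may be the most that is true.)

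Finally, this reduced claim is the problem of approximating $-b/(T-f)$ inside $A\{T\}$ when $b$ is small on $U$. Over the locus $\{\alpha(f) > 1\}$ of $\calM(A\{T\})$ the element $T - f$ is a unit, $(T-f)^{-1} = -\sum_k f^{-k-1}T^k$, and a truncation of $-b\sum_k f^{-k-1}T^k$ controls that locus; over $U$ the constant $b$ is already small, so there the correction $h$ only has to remain bounded. The crux — and, I expect, the main obstacle — is the locus $\{\alpha(f) < 1\} \subseteq U$, where $f^{-1}$ has large spectral norm, so that naive truncations of the geometric series diverge; this is exactly the mechanism behind the possible non-uniformity of $A\{f\}$. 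I would handle it by a successive-approximation construction of $h \in A\{T\}$ in the spirit of the iterative estimates of Example~\ref{exa:not Banach field} and of the proof of Theorem~\ref{T:no Banach field not locally compact}, but arranged to converge to an actual solution rather than to a counterexample, using crucially that $A$ is a field: $T - f$ is monic linear over $A$ (so polynomials in $T$ divide exactly by it) and $f$ is invertible in $A$ (so the geometric-series inverse is available wherever $\left| f \right|$ is bounded below). Balancing the successive degree cutoffs and geometric weights so that the tails over $\{\alpha(f) < 1\}$ are absorbed while $\left| b + (T-f)h \right|_{\spect,A\{T\}}$ remains controlled by $\left| b \right|_{\spect,U}$ is where the substantive work lies.
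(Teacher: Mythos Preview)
The paper does not supply a proof of this theorem; it simply cites it as \cite[Theorem~3.7]{mihara} and uses it as a black box. So there is nothing in the paper to compare against, and your proposal has to stand on its own.

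It does not. Your ``reduced claim'' --- that for $b$ ranging over $A$, $\left| b \right|_\dagger \to 0$ whenever $\left| b \right|_{\spect,U} \to 0$ --- is not a weakening of the equality $\left| b \right|_\dagger = \left| b \right|_{\spect,U}$; it is equivalent to uniformity of $A\{f\}$. Indeed, if $A\{f\}$ fails to be uniform, one can find $c_n \in A\{f\}$ with $\left| c_n \right|_{\spect} \to 0$ and (after rescaling by powers of a topologically nilpotent unit of $F$) $\left| c_n \right|_\dagger$ lying in some fixed interval $[\epsilon,1]$; approximating each $c_n$ by $b_n \in A$ with $\left| c_n - b_n \right|_\dagger < \min(\epsilon, 1/n)$ then yields a sequence in $A$ with $\left| b_n \right|_\dagger = \left| c_n \right|_\dagger \geq \epsilon$ but $\left| b_n \right|_{\spect,U} \leq \max\{\left| c_n \right|_{\spect}, \left| c_n - b_n \right|_\dagger\} \to 0$. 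Since the theorem expressly disclaims uniformity, you have reduced to a statement that need not hold.

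You might retreat to asking only that $\left| b_N \right|_\dagger \to 0$ for the \emph{particular} sequence $b_N = \sum_{n \leq N} c_n f^n$ coming from a lift of a spectrally null $a$; but your final paragraph makes no use of anything specific to these $b_N$, and in any case stops at exactly the point you yourself flag as ``where the substantive work lies.'' The obstruction you name --- that on $\{\alpha(f) < 1\}$ the geometric series for $(T-f)^{-1}$ diverges, so any candidate $h \in A\{T\}$ approximating $-b/(T-f)$ there must be traded off against its growth on $\{\alpha(f) \geq 1\}$ --- is precisely the mechanism that produces non-uniformity, and nothing in your sketch explains how the field hypothesis on $A$ dissolves it. As written the argument is both incomplete and aimed at too strong a target.
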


\begin{cor}
Let $A$ be a uniform Banach field over $F$ which is a completion of $F(t)$ for some $t \in A$.
Then $A$ is \emph{sheafy} as an f-adic ring. (That is, for any ring of integral elements $A^+$ of $A$, the structure presheaf on $\Spa(A,A^+)$ is a sheaf. See \cite[Lecture~1]{kedlaya-aws} for more discussion of this condition.)
\end{cor}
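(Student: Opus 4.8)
This is essentially a consequence of Mihara's work \cite{mihara}. The plan is to verify that $A$ is \emph{stably uniform} — that is, that $A$ is f-adic and every rational localization of $A$ is uniform — and then to invoke the theorem that a stably uniform f-adic ring is sheafy (see \cite[Lecture~1]{kedlaya-aws} and the references therein).

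That $A$ is f-adic (in fact Tate) is immediate: $A^{\circ}$ is open and bounded, hence a ring of definition, and $\varpi A^{\circ}$ is an ideal of definition for any $\varpi \in F$ with $0 < \left| \varpi \right|_F < 1$. Since $A$ is a field, every nonzero element is a unit, so the denominator in any rational localization is a unit; absorbing it, every rational localization of $A$ has the form $B := A\{f_1, \dots, f_n\} := A\{T_1, \dots, T_n\}/(T_1 - f_1, \dots, T_n - f_n)$ for some $f_1, \dots, f_n \in A$. Because $F(t)$ is dense in $A$ and $A$ has dense image in $B$, every nonzero such $B$ is again a completion of $F(t)$; thus the hypothesis on $A$ propagates to its rational localizations.

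It remains to show that each $B$ is uniform. When $n = 1$, Theorem~\ref{T:mihara} gives that $B$ is spectrally reduced. The several-variable case does not follow by naive iteration — the intermediate one-variable localizations of a field need not themselves be fields, so Theorem~\ref{T:mihara} cannot be reapplied verbatim — so one needs either a several-variable refinement of Mihara's theorem or a more careful iteration argument; and from spectral reducedness one must still extract the uniformity required for stable uniformity. I expect this to be the main obstacle. It is precisely here that the hypothesis that $B$ is a completion of $F(t)$ is essential: for a general uniform Banach field $A$, the ring $A\{f\}$ can fail to be uniform, or even to be a Banach field, so some input beyond Theorem~\ref{T:mihara} (supplied by the $F(t)$-structure, in the spirit of the analysis of $\calM(A)$ as a subset of the Berkovich affine line carried out in earlier sections) is needed.

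Finally, we remark that when $\left| F^\times \right|$ is dense in $\RR^+$ the corollary already follows from Theorem~\ref{T:no Banach field not locally compact}: in that case $A$ is a nonarchimedean field, and nonarchimedean fields are sheafy. So the genuine content of the corollary lies in the case where $F$ is discretely valued.
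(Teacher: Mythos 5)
Your route — stably uniform $\Rightarrow$ sheafy — is not the one the paper takes, and the gaps you flag are likely not fillable along that route. Theorem~\ref{T:mihara} yields only that $A\{f\}$ is \emph{spectrally reduced} (the spectral seminorm is a norm), which is strictly weaker than uniformity, and the paper explicitly notes that $A\{f\}$ ``is not guaranteed \ldots to be uniform.'' So the plan of verifying stable uniformity is not just incomplete; it targets a conclusion that the available input does not support and that may simply be false. You also overstate what propagates to the localizations: $B = A\{f_1,\dots,f_n\}$ inherits density of $F(t)$, but not the property of being a uniform Banach field, so the hypothesis does not genuinely ``propagate.''

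The paper instead checks sheafiness directly, using \cite[Proposition~2.4.20]{part1} to reduce to exactness of the simple Laurent covering sequence $0 \to A\{f\} \to A\{f,g\}\oplus A\{f,g^{-1}\} \to A\{f,g^{\pm}\} \to 0$ for $f,g\in A$. Mihara's spectral reducedness of $A\{f\}$ supplies exactness on the \emph{left} only; right-exactness is by surjectivity; and exactness in the middle is obtained by showing that the relevant ideals $(T-g)A\{f\}\{T\}$ and $(1-gT)A\{f\}\{T\}$ are closed, which is done via a finiteness/pseudocoherence argument over $A\{S,T\}$. None of this appears in your proposal. Crucially, the ``several-variable obstacle'' you worry about is dissolved by the observation, itself riding on the hypothesis that $A$ is a completion of $F(t)$, that \emph{every} rational localization of $A$ can be rewritten as $A\{h\}$ for a single $h\in A$; you did not notice this, and without it the argument does not close. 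Your final remark — that for $\left|F^\times\right|$ dense the corollary follows from Theorem~\ref{T:no Banach field not locally compact} and nonarchimedean fields are sheafy — is correct and is a helpful observation about where the real content lies, but it does not repair the main argument.
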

\begin{proof}
Under the hypothesis on $A$, every rational localization of $A$ can be written as $A\{f\}$ for some $f \in A$.
By \cite[Proposition~2.4.20]{part1}, it then suffices to check that for every $f,g \in A$, the sequence
\begin{equation} \label{eq:mihara seq}
0 \to A \{f\} \to A\{f,g\} \oplus A\{f,g^{-1}\} \to A\{f,g^{\pm}\} \to 0
\end{equation}
is exact. (Note that \emph{a priori} we must allow $g \in A\{f\}$, but since $A$ has dense image in $A\{f\}$ we may replace $g$ with a nearby element of $A$ without changing $A\{f,g\}$ or $A\{f,g^{-1}\}$.)

By Theorem~\ref{T:mihara}, the sequence \eqref{eq:mihara seq} is exact at the left. 
Since $A\{f\}\{T^{\pm}\} \to A\{f,g^{\pm}\}$ is surjective (where $A\{f\}\{T^{\pm}\}$ is defined as in Remark~\ref{R:remarks on discrete case}), \eqref{eq:mihara seq} is exact at the right. To prove exactness at the middle, by \cite[Lemma~1.7.2]{kedlaya-aws} it suffices to check that the ideals $(T-g)A\{f\}\{T\}$ and
$(1-gT)A\{f\}\{T\}$ are closed; since the arguments are similar, we check only the first case in detail.
Note that by \cite[Corollary~1.1.14]{kedlaya-aws}, it suffices to check that the closure of $(T-g)A\{f\}\{T\}$ is finitely generated over $A\{f\}\{T\}$; it thus in turn suffices to check that $A\{f,g\}$ is a pseudocoherent module over $A\{S,T\}$ via the map taking $S$ to $f$ and $T$ to $g$. As noted above, we can rewrite $A\{f,g\}$ as $A\{h\}$ for some $h \in A$;
since $A$ is uniform, by \cite[Lemma~1.5.26]{kedlaya-aws} $A\{h\}$ is a pseudocoherent 
module over $A\{U\}$ via the map taking $U$ to $h$. Hence $A\{f,g\} = A\{h\}$ is pseudocoherent 
as a module over $A\{S,T,U\}$, and hence as a module over $A\{S,T\}$.
\end{proof}

\section{Perfectoid rings and fields}

It is not clear whether one can adapt the proof of Theorem~\ref{T:no Banach field not locally compact} to fully resolve Question~\ref{Q:Banach field} one way or the other. However, we can now answer a foundational question from the theory of perfectoid spaces,
as in \cite{part1} or \cite{scholze1}.

\begin{defn}
Fix a prime number $p$.
A \emph{perfectoid ring} is a uniform Banach ring $A$ containing a topologically nilpotent
unit $\varpi$ such that $\varpi^p$ divides $p$ in $A^\circ$ and the Frobenius map
$\varphi: A^\circ/(\varpi) \to A^{\circ}/(\varpi^p)$ is surjective.
This definition is due to Fontaine \cite{fontaine-bourbaki} and matches the one used by Kedlaya--Liu in \cite{part2}; the definitions used by Scholze in \cite{scholze1} and Kedlaya--Liu in \cite{part1} are more restrictive. 
The definition used by Bhatt--Morrow--Scholze \cite{bhatt-morrow-scholze}, modeled on that of
Gabber--Ramero \cite{gabber-ramero2}, is (slightly) more permissive.
See Remark~\ref{R:definitions of perfectoid} for further discussion.
\end{defn}

\begin{theorem} \label{T:perfectoid ring field}
Any perfectoid ring which is a Banach field is a perfectoid field.
\end{theorem}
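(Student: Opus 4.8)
The plan is to reduce the statement, via tilting, to the case of characteristic $p$, and there to realize $A$ as a uniform Banach algebra over a nonarchimedean field with dense value group so that Theorem~\ref{T:no Banach field not locally compact} applies.

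I would begin by reformulating the goal. A perfectoid ring is by definition uniform, so if $A$ is a perfectoid ring which is a Banach field then, by Lemma~\ref{L:point to field} together with Remark~\ref{R:single point2}, the ring $A$ is a nonarchimedean field precisely when $\calM(A)$ consists of a single point; moreover a perfectoid ring whose underlying topological ring is a nonarchimedean field is automatically a perfectoid field, since then $A^\circ$ is the valuation ring, $\varpi^p$ divides $p$ in it, and the Frobenius on $A^\circ/(\varpi)$ is surjective onto $A^\circ/(\varpi^p)$, as required in the definition of a perfectoid field (the valuation is nondiscrete because $A$ contains the topologically nilpotent unit $\varpi$). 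Thus it suffices to prove that $\calM(A)$ is a single point. Now if $\mathrm{char}\, A = 0$, I would pass to the tilt $A^\flat$, a perfectoid ring of characteristic $p$; it is again a field, because a nonzero element $(x^{(0)}, x^{(1)}, \dots)$ of $A^\flat = \varprojlim_{y \mapsto y^p} A$ has $x^{(0)} \neq 0$, hence all $x^{(i)} \neq 0$, and then $\big((x^{(i)})^{-1}\big)_{i}$ is an inverse. Since the tilting correspondence provides a homeomorphism $\calM(A) \cong \calM(A^\flat)$, it is enough to treat the characteristic $p$ case; recall also that in characteristic $p$ a perfectoid ring is perfect.

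So assume henceforth that $A$ is a perfect uniform Banach field of characteristic $p$, and fix a nonzero topologically nilpotent element $z \in A$ (for instance $z = \varpi$), so that $0 < \left| z \right|_{\spect} < 1$; in particular $z$ is transcendental over $\FF_p$ (otherwise $z$ would be a root of unity and $\left| z \right|_{\spect} = 1$). Since $A$ is perfect, all $p$-power roots $z^{1/p^n}$ lie in $A$, so we may form the subfield $\FF_p(z^{1/p^\infty}) = \bigcup_n \FF_p(z^{1/p^n}) \subseteq A$. Let $v$ be the valuation on $\FF_p(z^{1/p^\infty})$ with $v(z^{1/p^n}) = p^{-n}$, and let $F$ be the completion of $\FF_p(z^{1/p^\infty})$ for the multiplicative norm $x \mapsto \left| z \right|_{\spect}^{v(x)}$. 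By Lemma~\ref{L:field completion}, $F$ is a nonarchimedean field, and $\left| F^\times \right|$ contains $\left| z \right|_{\spect}^{\ZZ[1/p]}$, which is dense in $\RR^+$. I claim the inclusion $\FF_p(z^{1/p^\infty}) \hookrightarrow A$ is continuous for the $v$-adic topology on the source: each $\alpha \in \calM(A)$ restricts on $\FF_p(z^{1/p^\infty})$ to a multiplicative norm (it is a norm, not just a seminorm, since $A$ is a field) which is trivial on $\FF_p$ and sends $z$ to some $\alpha(z)$ with $0 < \alpha(z) \le \left| z \right|_{\spect} < 1$; since the monomials $z^{i/p^n}$ then acquire pairwise distinct $\alpha$-values, this restriction must be exactly $x \mapsto \alpha(z)^{v(x)}$. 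Hence the valuation ring $\{x : v(x) \ge 0\}$ of $\FF_p(z^{1/p^\infty})$ maps into $A^\circ$, so the $v$-adic neighbourhood $z^N\{x : v(x) \ge 0\}$ of $0$ (for $N$ a positive integer) maps into $z^N A^\circ \subseteq \{\,\left| \bullet \right|_{\spect} \le \left| z \right|_{\spect}^N\,\}$, which shrinks to $\{0\}$; this gives continuity. As $A$ is complete, the inclusion extends to a continuous ring homomorphism $F \to A$, exhibiting $A$ as a uniform Banach algebra over the nonarchimedean field $F$, whose value group is dense in $\RR^+$. Theorem~\ref{T:no Banach field not locally compact} then shows $A$ is a nonarchimedean field, so $\calM(A)$ is a single point, as desired.

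The main obstacle I anticipate is precisely the continuity claim above: although the individual points $\alpha \in \calM(A)$ restrict to genuinely different multiplicative norms on $\FF_p(z^{1/p^\infty})$ — one for each value $\alpha(z)$ in a possibly nondegenerate interval, so that the subspace norm from $A$ is in general not multiplicative — one must extract from this the fact that they all agree on which elements are $v$-integral, which is what makes the subspace topology near $0$ coincide with the $v$-adic topology and thus places the completion inside the perfectoid base field $F$. The remaining ingredients (the tilting homeomorphism on Berkovich spectra, perfectness of a characteristic $p$ perfectoid ring, and the reduction to the one-point-spectrum criterion) are standard.
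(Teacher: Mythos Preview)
Your proposal is correct and follows essentially the same route as the paper: reduce to characteristic $p$ via tilting (using that the tilt of a perfectoid Banach field is again a field and that $\calM(A)\cong\calM(A^\flat)$), then in characteristic $p$ use perfectness to embed the completion of $\FF_p(z^{1/p^\infty})$ as a nonarchimedean base field with dense value group and invoke Theorem~\ref{T:no Banach field not locally compact}. The paper states the continuity of $\widehat{\FF_p((z))^{\perf}}\to A$ without further comment (as in Lemma~\ref{L:top nil unit}), whereas you supply the details via the observation that every $\alpha\in\calM(A)$ restricts to $\alpha(z)^{v(\cdot)}$ on $\FF_p(z^{1/p^\infty})$; this is a welcome elaboration but not a different argument.
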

\begin{proof}
Let $A$ be a perfectoid ring whose underlying ring is a field; then the characteristic of $A$ is either $0$ or $p$. In the latter case, $A$ is perfect, so as in Lemma~\ref{L:top nil unit}, for any $z \in A^{\circ \circ}$
we may view $A$ as a Banach algebra over the completion of $\FF_p((z))^{\perf}$ for the $z$-adic norm (for some normalization). Since the latter is a nonarchimedean field whose norm group is dense in $\RR^+$, Lemma~\ref{L:compatible norm}
and Theorem~\ref{T:no Banach field not locally compact} together imply that $A$ is a nonarchimedean field.

We may thus suppose hereafter that $A$ is of characteristic $0$; this implies that $A$ is a Banach algebra over $\QQ_p$. Apply the perfectoid (tilting) correspondence \cite[Theorem~3.6.5]{part1} to $A$ to obtain a perfectoid ring $R = A^\flat$ of characteristic $p$ with $\calM(A) \cong \calM(R)$
and a surjective homomorphism $\theta: W(R^\circ) \to A^\circ$.
By \cite[Proposition~3.6.25]{part1}, we have an identification of multiplicative monoids
\[
R \cong \varprojlim_{x \mapsto x^p} A, \qquad
r \mapsto (\dots, \theta([r^{1/p}]), \theta([r])).
\]
In particular, if $r \in R$ is nonzero, then $\theta([r^{1/p^n}]) \neq 0$ for some $n$ and hence for all $n$, and $(\dots, \theta([r^{1/p}])^{-1}, \theta([r])^{-1})$
is an element of $\varprojlim_{x \mapsto x^p} A$ corresponding to a multiplicative inverse of $r$. We conclude that $R$ is a perfectoid ring of characteristic $p$ which is a Banach field; by the previous
paragraph, $R$ is a perfectoid field. By Remark~\ref{R:single point2},
$\calM(R)$ is a single point, as then is $\calM(A)$; by Lemma~\ref{L:point to field}, $A$ is a nonarchimedean field, and hence a perfectoid field.
\end{proof}

\begin{remark} \label{R:definitions of perfectoid}
In \cite{scholze1}, the only perfectoid rings considered are algebras over perfectoid fields;
one may apply Theorem~\ref{T:no Banach field not locally compact} directly (without tilting) to show that any such ring which is a field is a perfectoid field.
However, a perfectoid ring in Fontaine's sense, or even in the sense of \cite{part1} (i.e., a Fontaine perfectoid ring which is also a $\QQ_p$-algebra) need not be a Banach algebra over any perfectoid field, so Theorem~\ref{T:no Banach field not locally compact} cannot be applied directly.

One way to see this explicitly is to construct a perfectoid ring $A$ admitting quotients isomorphic to the completions of $\QQ_p(\mu_{p^\infty})$ and $\QQ_p(p^{1/p^\infty})$.
If $A$ is a Banach algebra over some field $F$, then $F$ must be isomorphic to a subfield of the completion $F_1$ of $\QQ_p(\mu_{p^\infty})$. Note that $F_1$ must also be the completion of
$F(\mu_{p^\infty})$, whose Galois group is a closed subgroup of $\ZZ_p^\times$; by applying the Ax--Sen--Tate theorem \cite{ax} twice (to $F_1$ as a completed algebraic extension of both $\QQ_p$ and $F$), we deduce that $F$ equals either $F_1$ or $\QQ_p(\mu_{p^n})$ for some $n$. 
Similarly, $F$ must be isomorphic to a subfield of the completion $F_2$ of $\QQ_p(p^{1/p^\infty})$, and so must equal either $F_2$ or $\QQ_p(p^{1/p^n})$ for some $n$. The only choice consistent with both constraints is $F = \QQ_p$.

It remains to describe such a ring $A$ explicitly. 
Let $R_1, R_2$ be the completed perfect closures of $\FF_p \llbracket \pi_1 \rrbracket$,
$\FF_p \llbracket \pi_2 \rrbracket$;
we then have
\[
F_1 \cong W(R_1)[p^{-1}]/([\pi_1+1]^{p-1} + \cdots + [\pi_1+1] + 1),
\qquad
F_2 \cong W(R_2)[p^{-1}]/([\pi_2] - p).
\]
Let $R_3$ be the completed perfect closure of $\FF_p \llbracket \pi_1, \pi_2 \rrbracket$;
we may then take
\[
A = W(R_3)[p^{-1}]/([\pi_1+1]^{p-1} + \cdots + [\pi_1+1] + 1 - [\pi_2]).
\]
This ring is perfectoid and admits surjective morphisms $A \to F_1$, $A \to F_2$
induced by the respective substitutions $\pi_2 \mapsto 0$, $\pi_1 \mapsto 0$.
\end{remark}

\begin{remark}
Suppose that $A$ is a perfectoid ring and $\frakm$ is a maximal ideal of $A$.
As in Remark~\ref{R:maximal quotient}, $A/\frakm$ is a Banach field. 
If $A/\frakm$ is of characteristic $p$, then it is also perfect and uniform,
so Theorem~\ref{T:perfectoid ring field} implies that $A$ is a nonarchimedean field.
By contrast, if $A/\frakm$ is of characteristic $0$, then $(A/\frakm)^u$ is perfectoid
\cite[Theorem~3.6.17]{part1}, but (again as in Remark~\ref{R:maximal quotient}) this is not enough to deduce from Theorem~\ref{T:no Banach field not locally compact} that $A/\frakm$ is a nonarchimedean field.
\end{remark}

\end{document}